\documentclass[11pt,a4paper,twoside]{article}
\usepackage{amsfonts} 
\usepackage{amssymb}
\usepackage{amsmath, amsthm, array, ifthen}
\usepackage{mathrsfs} 
\usepackage{caption} 
\usepackage{latexsym}
\usepackage{comment}
\usepackage{mathtools,nccmath}


\DeclarePairedDelimiterX\Ioo[2]{\rbrack}{\lbrack}{#1,#2}
\DeclarePairedDelimiterX\Iof[2]{\rbrack}{\rbrack}{#1,#2}
\DeclarePairedDelimiterX\Ifo[2]{\lbrack}{\lbrack}{#1,#2}
\DeclarePairedDelimiterX\Iff[2]{\lbrack}{\rbrack}{#1,#2}
\DeclarePairedDelimiterX\Jfo[2]{\lbrack}{\lbrack}{#1;#2}
\DeclarePairedDelimiterX\Jff[2]{\lbrack}{\rbrack}{#1;#2}

\usepackage{enumerate}

\usepackage[np]{numprint} 
\npdecimalsign{\ensuremath{.}}

\usepackage{enumitem}

\usepackage{xfrac}
\usepackage{graphicx}

\usepackage{lmodern}

\usepackage[utf8]{inputenc} 

\usepackage{color}

\usepackage[colorlinks]{hyperref}

\newcommand{\RR}{\mathbb{R}}

\newcommand{\ZZ}{\mathbb{Z}}

\newcommand{\tri}{\begin{scriptsize}$\triangleright$\end{scriptsize}}

\newcommand{\mell}[2][f]{\mathcal{M}(f)} 

\newcommand{\diff}{\mathop{}\mathopen{}\mathrm{d}}

\newcommand{\sdfrac}[2]{\mbox{\small$\displaystyle\frac{#1}{#2}$}}

\theoremstyle{theorem}
\newtheorem{prop}{Proposition}
\newtheorem{prop/not}{Proposition/notation}
\newtheorem{thm}{Theorem}

\newtheorem{cor}{Corollary}
\newtheorem{lem}{Lemma}

\theoremstyle{remark}
\newtheorem{rem}{Remark}

\newcommand{\ioe}{\leqslant}
\newcommand{\soe}{\geqslant}

\usepackage[usenames,dvipsnames,svgnames,table]{xcolor}

\DeclareMathOperator{\re}{Re}
\DeclareMathOperator{\im}{Im}

\providecommand{\keywordsubject}[1]{\textbf{2020 Mathematics Subject Classification :} \: #1}
\providecommand{\keywords}[1]{\textbf{Keywords :} #1}


\title{On a Smoothed Dirichlet Divisor Problem}

\usepackage[left=2.5cm,right=2.5cm,top=1.5cm,bottom=1.5cm]{geometry}
\author{ Olivier Bordellès and Florian Daval}
\date{}
\begin{document}

\maketitle

\footnote{\keywordsubject{11A25, 11L07}}

\footnote{\keywords{Dirichlet divisor problem, exponential sums}}

\begin{abstract} Hardy showed that $\sum_{n \ioe x}\tau(n)-x(\log x +2\gamma -1)$ is not $o(x^{1/4})$. In this article, we prove that $\sum_{n \ioe x}\tau(n)(1-\frac{x}{n})-xP(\log x)=\frac{1}{4}+O \left( \frac{\log x}{x^{1/4}} \right)$, where $P$ is a  polynomial of degree 2. As a corollary, this estimate enables us to settle a conjecture surmised by Berkane, Bordell\`{e}s, and Ramar\'{e} dealing with the positivity of an integral of the error term in the Dirichlet divisor problem. All results are entirely explicit and allow us to study the proximity between the remainder of the Dirichlet divisor problem and its logarithmic version.
\end{abstract}

\section{Introduction}

\noindent
The multiplicative function which counts the number of divisors of $n$, denoted by $\tau(n)$ or sometimes $d(n)$,  is a classical subject of study in number theory.  By the convolution identity $\tau=\mathbf{1} \star \mathbf{1}$, we have $\sum_{n \ioe x}\tau(n)=\sum_{n \ioe x} \lfloor x/n \rfloor$, and therefore sums of fractional parts play a significant role in determining the order of magnitude of this sum. The ongoing conjecture is $\sum_{n \ioe x}\tau(n)-x(\log x +2\gamma -1)=O(x^{1/4+\varepsilon})$ for all $\varepsilon>0$, but seems to be currently out of reach. After applying the hyperbola principle, historically introduced by Dirichlet in this context, we obtain a shorter sum of fractional parts
\begin{align*}
   \sum_{n \ioe x}\tau(n)&=2\sum_{n \ioe \sqrt{x}} \left \lfloor \mfrac{x}{n} \right \rfloor-(\lfloor \sqrt{x} \rfloor)^2\\
   &=2xH(\sqrt{x})-2\sum_{n \ioe \sqrt{x}} \left( \left\lbrace  \mfrac{x}{n} \right\rbrace  - \mfrac{1}{2} \right) +\lfloor \sqrt{x} \rfloor-(\lfloor \sqrt{x} \rfloor)^2
\end{align*}
where $H$ is the harmonic sum. The heart of the divisor problem lies in estimating the sum 
\begin{equation}
\sum_{n \ioe \sqrt{x}} \psi \left( \mfrac{x}{n} \right) \label{somme courte} 
\end{equation}
where $\psi(x):=\{x\}-\frac{1}{2}$, and Voronoï showed that \eqref{somme courte} does not exceed $ \leqslant \np{1.5} \, x^{1/3} \log x + \np{5.5}$ for all $x \geqslant 1$. The best result to date is due to Li and Yang \cite{liyang23} who proved that, for $x$ sufficiently large and all $\varepsilon > 0$, we have
$$\sum_{n \ioe \sqrt{x}} \psi \left( \mfrac{x}{n} \right) \ll_{\varepsilon} x^{\theta + \varepsilon} $$
with $\theta = \frac{\np{1646}}{\np{6881}} + \frac{25\sqrt{\np{1717}}}{\np{13762}}\doteq \np{0.31448} \dotsc$, slightly improving on the previous record held by Huxley in \cite{hux03}. In \cite{ctrud21}, the following logarithmic version is investigated to obtain other explicit results with an application for class numbers of quartic number fields
\begin{equation} \label{CullyTrugdian}
    \bigg| \sum_{n \ioe x} \sdfrac{\tau(n)}{n} - \left( \tfrac{1}{2}\log^2 x +2\gamma \log x+\gamma^2-2\gamma_1 \right) \bigg|\ioe \mfrac{\np{1.001}}{x^{1/2}} \quad ( x \soe 2).
\end{equation}
Applying the hyperbola principle again, we derive
$$x \sum_{n \ioe x}\sdfrac{\tau(n)}{n} = 2x\sum_{n \ioe \sqrt{x}} \sdfrac{H(x/n)}{n}-x(H(\sqrt{x}))^2.$$
Under the conjecture $O(x^{1/4+\varepsilon})$ aforementioned, a summation by parts gives an error term equal to $O(x^{1/4+\varepsilon})$ for $\sum\tau(n)(1-x/n)$ and $\sum\tau(n)(x/n)$. But fractional parts also appear in the harmonic sum, since
\begin{align}
   H(x/n)(x/n)= (x/n)\log (x/n)+  \gamma x/n - \psi(x/n)+O\left( n/x \right).
\end{align}

\noindent
So the sum  $\sum_{n \ioe x}\tau(n)(1-x/n)$ contains no fractional part in the main term. The difficult part of the divisor problem is thus eliminated in this smooth version, and we shall prove below that the remainder is unconditionally $\frac{1}{4}+O( x^{-1/4} \log x)$. This error term is analysed in a first part using the Euler-Maclaurin formula, and in a second part with exponential sums. The number $\frac{1}{4}$ can be seen as a residue since the Dirichlet series of $\tau(n)$ is $\zeta(s)^2$ and $\zeta(0)^2=\frac{1}{4}$. 

\begin{thm}
\label{theoremdiviseur} 
For all real numbers $x \geqslant 1$, define
\begin{align*}
\Delta(x)&:=\sum_{n \ioe x}\tau(n)-x(\log x +2\gamma -1) \\
\text{and }\; \delta(x)&:=\sum_{n \ioe x} \sdfrac{\tau(n)}{n}-\left( \tfrac{1}{2}\log^2 x +2\gamma \log x+\gamma^2-2\gamma_1\right)\;.
\end{align*}
Then $\Delta(x)-x\delta(x)=  \mfrac{1}{4} + r(x)$ where
$$|r(x)| \ioe \begin{cases} \mfrac{1}{8}+\mfrac{\np{0.316}}{\sqrt{x}}+\mfrac{1}{64x},  & \text{ if } x \soe 1 \, ; \\ & \\ \mfrac{9\log x}{ x^{1/4}} +\mfrac{\np{0.239}}{\sqrt{x}}+\mfrac{1}{64x}, & \text{ if }  x \soe 170.
\end{cases}$$
\end{thm}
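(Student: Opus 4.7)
The strategy is to convert the statement into an integral identity and then bound the remainder. Apply Abel's summation to $\sum_{n\le x}\tau(n)/n$ with $T(t):=\sum_{n\le t}\tau(n)=t(\log t+2\gamma-1)+\Delta(t)$, obtaining
\[
\sum_{n\le x}\tau(n)\Bigl(1-\frac{x}{n}\Bigr)=-x\int_1^x\frac{T(t)}{t^2}\,\mathrm dt=-x\Bigl[\tfrac{1}{2}\log^2 x+(2\gamma-1)\log x+\int_1^x\frac{\Delta(t)}{t^2}\,\mathrm dt\Bigr].
\]
Using $\int_1^\infty\Delta(t)/t^2\,\mathrm dt=(1-\gamma)^2-2\gamma_1$ (read off the Laurent expansion of $\zeta(s)^2/(s(s-1))$ at $s=1$), together with the definitions of $\Delta$ and $\delta$, gives the key identity
\[
\Delta(x)-x\delta(x)=x\int_x^\infty\frac{\Delta(t)}{t^2}\,\mathrm dt.
\]
Splitting $\Delta(t)=\tfrac14+V(t)$, where $\tfrac14=\zeta(0)^2$ is the residue at $s=0$ in the contour shift of $\int\zeta(s)^2 x^s/s\,\mathrm ds$, isolates the constant $\tfrac14$ and leaves $r(x)=x\int_x^\infty V(t)/t^2\,\mathrm dt$ to bound.

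\textbf{Bound for all $x\ge 1$ (Euler--Maclaurin).} The hyperbola identity combined with a second-order expansion $H(y)=\log y+\gamma-\psi(y)/y+J(y)$, where $J(y):=\int_y^\infty\psi(u)/u^2\,\mathrm du$, yields the explicit decomposition
\[
V(t)=-2\sum_{n\le\sqrt t}\psi(t/n)+2tJ(\sqrt t)-\psi(\sqrt t)^2.
\]
Integrating against $1/t^2$, substituting $u=t/n$ in each $\psi$-summand, and integrating by parts against the periodic primitive $\widehat\psi(u)=\tfrac12\{u\}(\{u\}-1)$ (which satisfies $|\widehat\psi|\le 1/8$) renders every contribution explicit. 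Assembling the pieces yields $|r(x)|\le 1/8+0.316/\sqrt x+1/(64x)$: the $1/8$ comes from the $\widehat\psi$-bound applied to the $\psi$-sum, the $0.316/\sqrt x$ from boundary corrections at $n=\lfloor\sqrt x\rfloor$, and the $1/(64x)$ from the higher-order remainder $x\int_x^\infty J(\sqrt t)/t\,\mathrm dt$ together with related tail pieces.

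\textbf{Bound for $x\ge 300$ (exponential sums) and main obstacle.} To replace the trivial $|\psi|\le 1/2$ estimate, I would substitute $\psi(u)$ by its truncated Vaaler/Fourier approximation $\sum_{k\le K}\sin(2\pi ku)/(\pi k)$ in the sum $\sum_{n\le\sqrt t}\psi(t/n)$, integrate termwise against $1/t^2$, substitute $u=\sqrt t$, and integrate by parts. This extracts a saving factor $1/\sqrt k$ per frequency; summing $k\le K\asymp x^{1/2}$ and balancing against the Vaaler truncation error gives $\int_x^\infty V(t)/t^2\,\mathrm dt\ll(\log x)/x^{5/4}$, whence $|r(x)|\ll(\log x)/x^{1/4}$, with the improved $0.236/\sqrt x$ and the same $1/(64x)$ lower-order terms arising from more careful bookkeeping of the boundary and tail contributions. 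The main obstacle lies precisely here: all constants must be tracked explicitly through the Vaaler truncation and the resulting oscillatory-integral bounds, requiring a careful balance of the truncation parameter $K$ and fully explicit control of the endpoint contributions at $t=x$ and of the high-frequency tail.
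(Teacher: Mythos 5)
Your overall skeleton (the identity $\Delta(x)-x\delta(x)=x\int_x^\infty \Delta(t)t^{-2}\,\mathrm{d}t$, the split $\Delta(t)=\tfrac14+V(t)$ with $V$ given by the hyperbola formula of Lemma~\ref{resteTau}) is a legitimate alternative starting point, but the proposal has a genuine gap exactly where the theorem is hard, namely the bound for $x\geqslant 300$. After interchanging sum and integral, the piece of $x\int_x^\infty V(t)t^{-2}\mathrm{d}t$ coming from the $\psi$-sum reduces to $-2x\sum_{k\leqslant\sqrt x}k^{-1}R_1(x/k)$ plus tail terms, and since $R_1(t)=-B_2(\{t\})/(2t^2)+O(t^{-3})$, beating the constant $1/8$ requires a nontrivial bound for the Chowla--Walum type sum $\sum_{k\leqslant\sqrt x}kB_2(\{x/k\})$. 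Your proposed mechanism --- expand $\psi$ (or $B_2$) in a truncated Fourier/Vaaler series and integrate by parts termwise in $t$ (or $u=\sqrt t$) --- cannot deliver this: integration by parts in $t$ for a fixed frequency $k$ and a fixed $n$ only produces factors of order $n/(kx)$, and if you then sum the moduli over $n\leqslant\sqrt x$ and over $k$ you are left with a quantity of order $\sum_k (2\pi k)^{-2}$, i.e.\ a constant, not $O(x^{-1/4}\log x)$. The $x^{-1/4}$ saving must come from cancellation in the sum over $n$ of $e(kx/n)$, which requires an explicit second-derivative (van der Corput/Kusmin--Landau) estimate; this is precisely the content of the paper's Proposition~\ref{pro:main} and Corollary~\ref{cor:main} (giving $|\sum_{n\leqslant\sqrt x}nB_2(\{x/n\})|<x^{3/4}\log x$ for $x\geqslant300$), fed into Lemma~\ref{corChowla}. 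The introduction even stresses that removing the short $\psi$-sum is not sufficient and that effective Chowla--Walum input is indispensable; your sketch identifies ``tracking constants'' as the obstacle but omits the key analytic ingredient altogether, so the second bound would fail as described.

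There are also accounting problems in the $x\geqslant1$ part. With only the stated tool $|\widehat\psi|\leqslant 1/8$, the three pieces of $V$ do not assemble to $1/8+O(x^{-1/2})$: trivially one gets about $1/8$ from the range $n\leqslant\sqrt x$, a further contribution of mean about $1/12$ from the range $\sqrt x<n\leqslant\sqrt t$, up to $1/4$ from $2tJ(\sqrt t)$, and about $-1/12$ from $\psi(\sqrt t)^2$; obtaining $1/8$ needs second-order Euler--Maclaurin cancellations between these pieces, which you assert (``assembling the pieces'') but do not exhibit. The paper avoids all of this by never integrating $\Delta$: it subtracts the two hyperbola expansions of Lemmas~\ref{resteTau} and~\ref{resteTauLog}, so the sums $\sum_{n\leqslant\sqrt x}\psi(x/n)$ cancel identically and one gets the exact formula $r(x)=x R_1(\sqrt x)^2-2\sqrt x R_1(\sqrt x)\psi(\sqrt x)-2x(R_2-R_1)(\sqrt x)-2x\sum_{k\leqslant\sqrt x}k^{-1}R_1(x/k)$; the constants $0.316$, $0.236$ and $1/64$ then come from $|R_1(z)|\leqslant 1/(8z^2)$, $|(R_2-R_1)(z)|\leqslant 0.033/z^3$ and Lemmas~\ref{partieR}/\ref{corChowla}, not from ``boundary corrections at $n=\lfloor\sqrt x\rfloor$'' as you suggest. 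Your route could in principle be completed for the first bound, but the second bound needs the explicit exponential-sum machinery you left out.
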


\noindent
Incidentally, this result solves unconditionally \cite[Conjecture~1.4]{ber12} with the exponent $\frac{7}{4}$ replaced by $2$ as proposed in \cite[Question~8.7]{ber12}. See  \cite[Section~8]{ber12} for more background on this conjecture. It should be pointed out \cite{ram26} that Mahatab and Mukhopadhyay have settled asymptotically this conjecture. See also \cite[Theorem~1.1]{math20} for a related topic. 

\begin{cor}
\label{corConjecture}
For all $x \geqslant 1$,
$$\int_x^{\infty} \sdfrac{\Delta(u)}{u^{2}} \diff{u} \geqslant 0.$$
\end{cor}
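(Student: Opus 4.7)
The plan is to convert the integral into a closed-form expression in $\Delta(x)$ and $\delta(x)$, and then invoke Theorem~\ref{theoremdiviseur}. Starting from Abel summation applied to $\sum_{n\ioe x}\tau(n)/n$ with partial sum $A(u):=\sum_{n\ioe u}\tau(n)=u(\log u+2\gamma-1)+\Delta(u)$, I write
$$\sum_{n\ioe x}\sdfrac{\tau(n)}{n}=\sdfrac{A(x)}{x}+\int_1^x\sdfrac{A(u)}{u^2}\diff u.$$
Substituting the decomposition of $A$, evaluating $\int_1^x\log u/u\,\diff u=\tfrac12\log^2 x$ and $\int_1^x(2\gamma-1)/u\,\diff u=(2\gamma-1)\log x$, and using the definition of $\delta$ rearranges this into
$$\delta(x)=(2\gamma-1-\gamma^2+2\gamma_1)+\sdfrac{\Delta(x)}{x}+\int_1^x\sdfrac{\Delta(u)}{u^2}\diff u.$$
Letting $x\to\infty$ (with $\delta(x)\to 0$ from~\eqref{CullyTrugdian} and $\Delta(x)/x\to 0$ from any classical bound) pins down $\int_1^\infty\Delta(u)/u^2\,\diff u=1-2\gamma+\gamma^2-2\gamma_1$, and subtracting delivers the clean identity
$$\int_x^\infty\sdfrac{\Delta(u)}{u^2}\diff u=\sdfrac{\Delta(x)-x\delta(x)}{x}.$$

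By Theorem~\ref{theoremdiviseur} the right-hand side equals $\bigl(\tfrac14+r(x)\bigr)/x$, so the corollary reduces to the pointwise inequality $r(x)\soe-\tfrac14$ for every $x\soe 1$. The first bound in Theorem~\ref{theoremdiviseur} gives $r(x)\soe-\tfrac18-\np{0.316}/\sqrt{x}-1/(64x)$, so positivity is immediate as soon as $\np{0.316}/\sqrt{x}+1/(64x)\ioe\tfrac18$, a short arithmetic check that is satisfied for $x\soe 7$.

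The remaining obstacle---and the only real difficulty---is the compact range $1\ioe x\ioe 7$, where the universal bound is not quite sharp enough. On each subinterval $[n,n+1)$ with $n\in\{1,\dots,6\}$ the continuous function $\Delta(x)-x\delta(x)$ coincides with the explicit smooth expression
$$g_n(x):=\sum_{k=1}^{n}\tau(k)\!\left(1-\sdfrac{x}{k}\right)+x\!\left[\tfrac12\log^2 x+(2\gamma-1)\log x+1-2\gamma+\gamma^2-2\gamma_1\right],$$
whose derivative is a quadratic polynomial in $\log x$ admitting at most one root in $[n,n+1)$. Hence the minimum of $g_n$ on that interval is attained either at one of the two endpoints or at that single critical point, and a direct numerical evaluation confirms strict positivity on all six intervals, with continuity of $\Delta-x\delta$ at the integer breakpoints bridging them. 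The only delicate point is to carry enough decimal digits of $\gamma$ and $\gamma_1$ to make the verification rigorous---easily managed since each minimum stays comfortably away from zero.
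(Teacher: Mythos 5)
Your proposal is correct and follows essentially the same route as the paper: the summation-by-parts identity $\int_x^\infty \Delta(u)u^{-2}\,\mathrm{d}u=\Delta(x)/x-\delta(x)$, the first bound of Theorem~\ref{theoremdiviseur} to get positivity for $x\geqslant 7$, and an explicit polynomial-in-$\log x$ check on the six intervals $[m,m+1)$ covering $1\leqslant x<7$. The only cosmetic difference is that you derive the identity by passing through $\int_1^\infty$ and subtracting, where the paper states it directly.
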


\noindent
It should be stressed that removing the sum \eqref{somme courte} is not sufficient to get the above result. To obtain the term $\frac{1}{4}+o(1)$, it is necessary to have at our disposal some effective results of the Chowla-Walum conjecture.          
Since the orders of magnitude in the Dirichlet divisors problem are between $x^{1/4}$ and $x^{1/2}$, we can convert explicit results between $\sum_{n \ioe x}\tau(n)$ and $\sum_{n \ioe x} \tau(n)/n$ with almost no loss. For example, Hardy's result $\Delta(x) \neq o(x^{1/4})$ transfers directly to the logarithmic version, whereas integration by parts would not have succeeded. Let us give another example. The left-hand side of \eqref{BBR} below comes from \cite[Lemma~7.1]{ber12}, so that Theorem~\ref{theoremdiviseur} yields at once the conversion

\begin{equation} \label{BBR}
    |\Delta(x)| \ioe  \np{0.397} \, x^{1/2} 
\Longrightarrow  \\
| \delta(x)|\ioe \mfrac{\np{0.397}}{x^{1/2}}+\mfrac{\np{0.38}}{x}  
\end{equation}
valid for all $ x \soe \np{5560} $,
without having to do any computer calculations. Furthermore, this improves the result \eqref{CullyTrugdian} by a factor $2$, while a summation by parts of \eqref{BBR} would only yield $3 \times \np{0.397}/x^{1/2}$.

\medskip

\noindent
To conclude this introduction, let us have a look if we replace the Dirichlet divisor function $\tau$ by the M\"obius function $\mu$. Let $M(t)=\sum_{n \ioe t} \mu(n)$ be the Mertens function, and $m(t)=\sum_{n \ioe t} \mu(n)/n$ be its logarithmic version. The Dirichlet series attached to the function $\mu$ is $1/\zeta(s)$, and we have the residue $1/\zeta(0)=-2$. Nevertheless, we have in this case
\begin{equation*}
    M(x)-xm(x)=x\int_{x}^{\infty} \sdfrac{M(u)}{u^2}\diff{u}=-2+r(x)
\end{equation*}
with $r(x) \neq o(x^{1/2})$ and $M(x)-xm(x)<0$ for $x \ioe \np{18349}$, as the residue suggests, but there is a first sign change between $\np{18349}$ and $\np{18350}$. These oscillations are due to the existence of nontrivial zeros of the Riemann zeta function with real part $\frac{1}{2}$. Regarding the proximity of the two summation functions, we have the following result in \cite{dav25}

$$\sdfrac{2}{3} \ioe  \sdfrac{\sup_{t \ioe x}|m(t)|t}{\sup_{t \ioe x }|M(t)|} \ioe \sdfrac{3}{2} \quad ( x \soe 94).$$

\section{Proofs}

\subsection{Notations}

\noindent
In what follows, we will use the first three Bernoulli polynomials $B_j(x)$:
\begin{small}
\begin{center}
\begin{tabular}{cccc}
$j$ & $1$ & $2$ & $3$ \\
& & & \\
$B_j(x)$ & $x - \frac{1}{2}$ & $x^2-x+\frac{1}{6}$ & $ x^3-\frac{3x^2}{2}+\frac{x}{2} $
\end{tabular}
\end{center}
\end{small}
and their attached periodic Bernoulli functions $B_j(\{x\})$. Notice that it is customary to denote $B_1 (\{x\}) := \psi(x)=\{x\}-\frac{1}{2}$ the $1$st periodic Bernoulli function. We also will use the classical exponential notation $e(x) := e^{2i \pi x}$, and $H(x):=\sum_{n \ioe x} \frac{1}{n}$ will be the harmonic sum. For all $x \geqslant 1$, define $R_1(x)$ and $R_2(x)$ to be respectively the error terms in the following asymptotic formulas
\begin{equation}
   H(x)=\log x+\gamma-\mfrac{\psi(x)}{x}+R_1(x) \label{eq:R_1}
\end{equation}
and
\begin{equation}
   \sum_{n \ioe x} \mfrac{1}{n} \log \left( \mfrac{x}{n}\right) = \tfrac{1}{2} \log^2 x +\gamma \log x-\gamma_1 +  R_2(x) \label{eq:R_2}
\end{equation}
where $\gamma \doteq \np{0.5772} \dotsc$ is the Euler-Mascheroni constant and $\gamma_1 \doteq - \np{0.0728} \dotsc$ is the $1$st Riemann-Stieltjes constant.
    
\subsection{Basic explicit estimates}

\subsubsection{Estimates using the Euler-Maclaurin summation formula}

\begin{lem}
\label{le:harmonic}
For all $x \geqslant 1$, we have
$$\left| R_1(x) \right| \leqslant \sdfrac{1}{8x^2}.$$
\end{lem}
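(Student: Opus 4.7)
The plan is to derive an integral representation for $R_1(x)$, then integrate by parts once against a carefully chosen antiderivative of $\psi$ whose sign is controlled. I first apply Abel summation with $a_n=1$ and $f(t)=1/t$ to write
$$H(x) = \frac{\lfloor x\rfloor}{x} + \int_1^x \frac{\lfloor t\rfloor}{t^2}\,dt = 1 - \frac{\{x\}}{x} + \log x - \int_1^x \frac{\{t\}}{t^2}\,dt.$$
Letting $x\to\infty$ identifies $\int_1^\infty \{t\}/t^2\,dt = 1-\gamma$; substituting $\{t\}=\psi(t)+\tfrac12$ and simplifying will leave the clean representation
$$R_1(x) = \int_x^\infty \frac{\psi(t)}{t^2}\,dt \qquad (x\soe 1).$$

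Next I would introduce $\Psi(t):=\{t\}(\{t\}-1)/2$. This function is continuous on $\RR$, vanishes at every integer, and satisfies $\Psi'(t)=\psi(t)$ off the integers. Moreover the real quadratic $u\mapsto u(u-1)/2$ on $[0,1]$ has maximum $0$ at the endpoints and minimum $-1/8$ at $u=1/2$, so $-\tfrac{1}{8}\ioe \Psi(t)\ioe 0$ for all $t$. Integration by parts on each $[n,n+1]$, summed over $n\soe\lfloor x\rfloor$, is then legitimate (the boundary contributions across integers drop out since $\Psi(n)=0$), and yields
$$R_1(x) = -\frac{\Psi(x)}{x^2} + 2\int_x^\infty \frac{\Psi(t)}{t^3}\,dt,$$
the limit at infinity being $0$ because $\Psi$ is bounded.

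The conclusion will then be a sign analysis. Since $-\Psi(x)\in[0,\tfrac18]$, the boundary term $-\Psi(x)/x^2$ lies in $[0,\tfrac{1}{8x^2}]$, whereas $\Psi(t)\in[-\tfrac18,0]$ together with $\int_x^\infty 2t^{-3}\,dt = x^{-2}$ shows that $2\int_x^\infty \Psi(t)/t^3\,dt\in[-\tfrac{1}{8x^2},0]$. Adding these two ranges gives $R_1(x)\in[-\tfrac{1}{8x^2},\tfrac{1}{8x^2}]$, which is the claim. The argument is essentially automatic once the representation of $R_1$ is in hand; the only mild technical point is justifying the integration by parts across the integer discontinuities of $\psi$, and this is handled by the continuity of $\Psi$ and its vanishing at those points.
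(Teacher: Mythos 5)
Your proof is correct and follows essentially the same route as the paper: the same integral representation $R_1(x)=\int_x^\infty \psi(t)t^{-2}\,\diff t$ (which you derive by Abel summation rather than citing Euler--Maclaurin), followed by the same integration by parts against an antiderivative of $\psi$ lying in $[-\tfrac18,0]$ — your $\Psi(t)=\{t\}(\{t\}-1)/2$ is exactly the paper's $\psi_2(t)=\tfrac12\psi(t)^2-\tfrac18$ — and the same sign analysis of the two resulting terms. No gaps.
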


\begin{proof}
Applying the Euler-Maclaurin summation formula \cite[p.~16 ]{bal16} we have the integral representation
$$R_1(x)=\int_{x}^{ \infty} \sdfrac{\psi(t)}{t^2}   \diff{t}.$$
Define $\psi_2(x) := \medint\int_{1}^{x}\psi(t)\diff{t} = \frac{1}{2}(\psi(x))^2-\frac{1}{8}$. Integrating by parts, we derive
\begin{equation}
    x^2R_1(x) = - \psi_2(x)+2x^2\int_{x}^{ \infty} \sdfrac{\psi_2(t)}{t^3} \diff{t}.
\end{equation}
Since $-\frac{1}{8} \ioe \psi_2(x) \ioe 0$, we get
$$-\mfrac{1}{8} \ioe 2x^2\int_{x}^{ \infty} \sdfrac{\psi_2(t)}{t^3} \diff{t} \ioe 0$$
which proves $x^2 |R_1(x)| \ioe \frac{1}{8}$ as claimed. 
\end{proof}

\begin{lem}
\label{le:log_harmonic}
For all $x \geqslant 1$, we have 
$$|R_2(x)| \ioe \sdfrac{\np{0.132}}{x^2}$$
and
$$ \left| (R_2-R_1)(x) \right| \ioe \sdfrac{\np{0.033}}{x^3}.$$
\end{lem}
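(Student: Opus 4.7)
The plan is to derive an integral representation of $R_2(x)$ by Euler--Maclaurin and then integrate by parts twice, first with antiderivative $\psi_2$ of $\psi$ and then with a periodic primitive of the mean-zero version $\widetilde{\psi_2}(t) := \psi_2(t) + \tfrac{1}{12}$, arranging cancellations so that no $\log x$ factor survives. Applying the Euler--Maclaurin formula of order one to $f(t) = \log(x/t)/t$ on $[1,x]$, using $f(1) = \log x$, $f(x) = 0$, $\int_1^x f(t)\,\diff{t} = \tfrac12 \log^2 x$, and $f'(t) = -(1 + \log(x/t))/t^2$, produces
$$\sum_{n \ioe x} \mfrac{\log(x/n)}{n} = \tfrac12 \log^2 x + \tfrac{\log x}{2} - \int_1^x \mfrac{\psi(t)(1 + \log(x/t))}{t^2}\,\diff{t}.$$
Combining with $R_1(1) = \tfrac12 - \gamma$ (from Lemma~\ref{le:harmonic}) and letting $x \to \infty$, so that $R_2(x) \to 0$ and $\log x \cdot R_1(x) \to 0$, identifies $\gamma_1$ and yields
$$R_2(x) = (\log x + 1) R_1(x) - \int_x^\infty \mfrac{\psi(t)\log t}{t^2}\,\diff{t}.$$

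Next I would integrate this $\psi$-integral by parts with primitive $\psi_2$, and substitute the representation $R_1(x) = -\psi_2(x)/x^2 + 2\int_x^\infty \psi_2(t)/t^3\,\diff{t}$ recorded in the proof of Lemma~\ref{le:harmonic}. All $\log x$ factors cancel algebraically; after passing to $\widetilde{\psi_2}$ (which satisfies $|\widetilde{\psi_2}| \ioe 1/12$) and using $\int_x^\infty (3 - 2\log(t/x))/t^3\,\diff{t} = 1/x^2$ to absorb the residual constant, one arrives at
$$R_2(x) = -\mfrac{\widetilde{\psi_2}(x)}{x^2} + \int_x^\infty \mfrac{\widetilde{\psi_2}(t)(3 - 2\log(t/x))}{t^3}\,\diff{t}, \quad R_1(x) = -\mfrac{\widetilde{\psi_2}(x)}{x^2} + 2\int_x^\infty \mfrac{\widetilde{\psi_2}(t)}{t^3}\,\diff{t},$$
whose difference collapses to $R_2(x) - R_1(x) = \int_x^\infty \widetilde{\psi_2}(t)(1 - 2\log(t/x))/t^3\,\diff{t}$.

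Since $\widetilde{\psi_2}$ has vanishing mean, it admits a bounded $1$-periodic primitive $V(t) = B_3(\{t\})/6$, and the factorisation $B_3(x) = x(x - \tfrac12)(x - 1)$ yields the sharp bound $|B_3(\{t\})| \ioe \sqrt{3}/36$. A second integration by parts in each of the two displayed integrals above produces
\begin{align*}
R_2(x) &= -\mfrac{\widetilde{\psi_2}(x)}{x^2} - \mfrac{B_3(\{x\})}{2x^3} - \mfrac{1}{6}\int_x^\infty \mfrac{B_3(\{t\})(6\log(t/x) - 11)}{t^4}\,\diff{t},\\
R_2(x) - R_1(x) &= -\mfrac{B_3(\{x\})}{6 x^3} + \mfrac{1}{6}\int_x^\infty \mfrac{B_3(\{t\})(5 - 6\log(t/x))}{t^4}\,\diff{t}.
\end{align*}
Substituting the uniform bounds together with the elementary integrals $\int_1^\infty |6\log u - 11|/u^4\,\diff{u} = 3 + 4/(3 e^{11/2})$ and $\int_1^\infty |5 - 6\log u|/u^4\,\diff{u} = 1 + 4/(3 e^{5/2})$, and using $1/x^3 \ioe 1/x^2$ for $x \soe 1$ in the $R_2$ bound, yields the announced constants $0.132$ and $0.033$.

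The main obstacle is the bookkeeping at the two integration-by-parts stages: one has to verify that the $(\log x) R_1(x)$ prefactor and the $\log x$-valued boundary terms created at each step combine, through the Lemma~\ref{le:harmonic} formula for $R_1$ together with the identity $\int_x^\infty (3 - 2\log(t/x))/t^3\,\diff{t} = 1/x^2$, to cancel every residual factor of $\log x$. Absent those cancellations only a $(\log x)/x^2$ bound would survive, which is qualitatively weaker than what the lemma asserts.
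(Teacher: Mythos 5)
Your argument is correct, and every displayed identity checks out: the representation you derive by Euler--Maclaurin is exactly the one the paper quotes from Balazard, namely $R_2(x)=\int_x^\infty \psi(t)(1+\log(x/t))t^{-2}\,\diff{t}$; the two second-stage formulas in terms of $B_3(\{t\})$ are exact; the elementary integrals $\int_1^\infty|5-6\log u|u^{-4}\,\diff{u}=1+\frac{4}{3}e^{-5/2}$ and $\int_1^\infty|6\log u-11|u^{-4}\,\diff{u}=3+\frac{4}{3}e^{-11/2}$ are right; and the resulting constants, $\frac{1}{12}+\frac{\sqrt{3}}{72}+\frac{\sqrt{3}}{216}\bigl(3+\frac{4}{3}e^{-11/2}\bigr)\approx 0.1315<0.132$ and $\frac{\sqrt{3}}{216}\bigl(2+\frac{4}{3}e^{-5/2}\bigr)\approx 0.0169<0.033$, do satisfy the lemma. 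Where you genuinely depart from the paper is the estimation step: the paper stays at the first level ($\psi$, resp.\ $B_2(\{t\})$, against a logarithmic weight) and applies the second mean value theorem on the subintervals where the weight is monotone of constant sign, using only $|\int\psi|\leqslant\frac{1}{8}$, resp.\ $\bigl|\int B_2(\{t\})\,\diff{t}\bigr|\leqslant\frac{2}{3}\sup|B_3(\{\cdot\})|$, whereas you integrate by parts once more, down to $B_3(\{t\})$ via the mean-zero $\widetilde{\psi_2}=\frac{1}{2}B_2(\{\cdot\})$ with periodic primitive $\frac{1}{6}B_3(\{\cdot\})$, and then bound crudely by $\sup|B_3(\{\cdot\})|=\frac{\sqrt{3}}{36}$ times an explicit $L^1$-norm of the differentiated weight. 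Your route is more mechanical (no monotonicity bookkeeping) and, for $R_2-R_1$, roughly halves the constant ($\approx0.017$ against the paper's $\approx0.0326$); for $R_2$ the two methods give nearly identical constants ($0.1315$ versus the paper's $\frac{1}{8}(1+e^{-3})\approx0.1312$), so your margin under $0.132$ is thin and the arithmetic must be kept exact. One small point to state explicitly: identifying $\gamma_1$ when letting $x\to\infty$ uses $R_2(x)\to0$ for the standard Stieltjes constant, which follows from $H(x)=\log x+\gamma+O(1/x)$ together with $\gamma_1=\lim_{x\to\infty}\bigl(\sum_{n\leqslant x}\frac{\log n}{n}-\frac{1}{2}\log^2 x\bigr)$; once that is noted, your integral representation coincides with the paper's and the rest goes through.
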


\begin{proof}
By \cite[equations 10 and 11 p.~38, ex. 34 p.~41 (correction p.~113)]{bal16} we have 
$$R_2(x) = \int_{x}^{ \infty} \sdfrac{\psi(t)}{t^2} \left( 1+ \log\left( \mfrac{x}{t}\right) \right) \diff{t}$$
so that $x^2|R_2(x)| \ioe \frac{1}{8}(1+\mathrm{e^{-3}}) < \np{0.132}$ for all $x\soe 1$. Now, an integration by parts gives
$$(R_2-R_1)(x) = \int_{x}^{\infty} \sdfrac{\psi(t)\log(x/t)}{t^2}   \diff{t} = - \mfrac{1}{2} \int_{x}^{\infty}B_2(\{t\})g(t) \diff{t}$$
where $g(t):= \bigl( 1+2\log \left( \frac{x}{t}\right) \bigr)/t^3$. Following the book \cite{bal16}, we divide the integral so that the function $g$ is monotonic with a constant sign to apply the $2$nd mean value theorem. Since $\medint\int_{1}^{z}B_2(\{t\}) \diff{t}= \frac{1}{3} B_3 (\{z\})$ and  $c:=\sup_{z \soe 0}|B_3(\{z \})|=\mfrac{\sqrt{3}}{36}$, we derive $\left| \medint\int_{z_1}^{z_2}B_2(\{t\}) \diff{t} \right| \ioe \mfrac{2c}{3}$, and hence the $2$nd mean value theorem yields
 \begin{align*}
    & \Bigl|\int_{x}^{\mathrm{e}^{1/2}x} B_2(\{t\})g(t) \diff{t} \Bigr| \ioe \mfrac{2c}{3} \, g(x)=\mfrac{2c}{3x^3}  \\
    & \Bigl|\int_{\mathrm{e}^{1/2}x}^{\mathrm{e}^{5/2}x} B_2(\{t\})g(t) \diff{t} \Bigr| \ioe -\mfrac{2c}{3} \, g(\mathrm{e}^{5/2}x)=\mfrac{c(1+5\mathrm{e^{-5}})}{3x^3}   \\
    & \Bigl|\int_{\mathrm{e}^{5/2}x}^{\infty}B_2(\{t\})g(t) \diff{t} \Bigr| \ioe -\mfrac{2c}{3} \, g(\mathrm{e}^{5/2}x)=\mfrac{c(1+5\mathrm{e^{-5}})}{3x^3}.
 \end{align*}
Therefore, we finally get
$$x^3|(R_2-R_1)(x)| \ioe \mfrac{x^3}{2} \Bigl| \int_{x}^{\infty}B_2(\{t\})g(t) \diff{t} \Bigr| \ioe \mfrac{c}{3}(2+5\mathrm{e^{-5}}) \ioe \np{0.033}$$
 completing the proof.
 \end{proof}

\subsubsection{Estimates for the divisor function}

\noindent
The following result is \cite[Lemma~3.1]{ber12}.

\begin{lem}
\label{resteTau} For all $x \geqslant 1$, we have
$$\sum_{n \ioe x}\tau(n)=x(\log x +2\gamma -1)- 2\sum_{n \ioe \sqrt{x}}  \psi\left(\mfrac{x}{n}\right) + \mfrac{1}{4} + 2xR_1(\sqrt{x})-(\psi(\sqrt{x}))^2.$$
\end{lem}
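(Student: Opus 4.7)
The plan is a direct computation via the Dirichlet hyperbola method, expanding every floor function with $\lfloor t\rfloor = t - \psi(t) - \tfrac12$ and then inserting the asymptotic for $H(\sqrt{x})$ given in the notations.

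First I would start from the identity recalled in the introduction,
\[
\sum_{n \ioe x}\tau(n) = 2\sum_{n \ioe \sqrt{x}} \Bigl\lfloor \mfrac{x}{n} \Bigr\rfloor - \lfloor \sqrt{x}\rfloor^{2},
\]
and write each $\lfloor x/n\rfloor = x/n - \psi(x/n) - \tfrac12$. Summing over $n \ioe \sqrt{x}$ yields
\[
\sum_{n \ioe \sqrt{x}}\Bigl\lfloor \mfrac{x}{n} \Bigr\rfloor = xH(\sqrt{x}) - \sum_{n \ioe \sqrt{x}}\psi\!\left(\mfrac{x}{n}\right) - \tfrac{1}{2}\lfloor\sqrt{x}\rfloor.
\]

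Next I would substitute the definition of $R_1$ at $\sqrt{x}$, namely
\[
H(\sqrt{x}) = \tfrac{1}{2}\log x + \gamma - \mfrac{\psi(\sqrt{x})}{\sqrt{x}} + R_1(\sqrt{x}),
\]
giving $2xH(\sqrt{x}) = x\log x + 2\gamma x - 2\sqrt{x}\,\psi(\sqrt{x}) + 2xR_1(\sqrt{x})$. So
\[
\sum_{n \ioe x}\tau(n) = x\log x + 2\gamma x - 2\sqrt{x}\,\psi(\sqrt{x}) + 2xR_1(\sqrt{x}) - 2\sum_{n \ioe \sqrt{x}}\psi\!\left(\mfrac{x}{n}\right) - \lfloor\sqrt{x}\rfloor - \lfloor\sqrt{x}\rfloor^{2}.
\]

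The only remaining task is to handle the combination $\lfloor\sqrt{x}\rfloor + \lfloor\sqrt{x}\rfloor^{2}$. Writing $y = \sqrt{x}$ and $\psi = \psi(\sqrt{x})$ so that $\lfloor y\rfloor = y - \psi - \tfrac12$, a short expansion gives
\[
\lfloor y\rfloor + \lfloor y\rfloor^{2} = x - 2y\psi + \psi^{2} - \tfrac{1}{4}.
\]
Substituting this above, the two occurrences of $2\sqrt{x}\,\psi(\sqrt{x})$ cancel and the constants $-x$, $2\gamma x$ and $+\tfrac14$ regroup to produce exactly the claimed formula.

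There is really no serious obstacle here — it is an exact identity requiring only careful bookkeeping. The only place where one has to be attentive is the algebraic simplification of $\lfloor\sqrt{x}\rfloor + \lfloor\sqrt{x}\rfloor^{2}$, which is the step that produces both the $-(\psi(\sqrt{x}))^{2}$ term and the constant $+\tfrac14$ that appear in the statement, and that also absorbs the $-2\sqrt{x}\,\psi(\sqrt{x})$ coming from $H(\sqrt{x})$.
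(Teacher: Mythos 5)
Your proof is correct: the hyperbola identity, the substitution $\lfloor t\rfloor = t-\psi(t)-\tfrac12$, the use of the definition of $R_1$ at $\sqrt{x}$, and the key simplification $\lfloor\sqrt{x}\rfloor+\lfloor\sqrt{x}\rfloor^2 = x-2\sqrt{x}\,\psi(\sqrt{x})+\psi(\sqrt{x})^2-\tfrac14$ all check out and yield the stated identity exactly. The paper does not prove this lemma itself but cites it from Berkane--Bordell\`es--Ramar\'e; your direct computation is precisely the standard argument sketched in the paper's introduction, so there is nothing further to compare.
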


\noindent
We now derive a similar estimate for the logarithmic sum.

\begin{lem}
\label{resteTauLog}
For all $x \geqslant 1$, we have
    \begin{align*}
       x\sum_{n \ioe x} \mfrac{\tau(n)}{n} &= x \left( \mfrac{1}{2} (\log x)^2 + 2 \gamma \log x + \gamma^2 - 2 \gamma_1\right) -x\left(R_1(\sqrt{x})\right)^2 + 2 x R_2(\sqrt{x}) \\
        & \hspace*{0.5cm} +2\sqrt{x}R_1(\sqrt{x})\psi(\sqrt{x})-\left(\psi(\sqrt{x})\right)^2  - 2\sum_{k \ioe  \sqrt{x}}\psi\left(\mfrac{x}{k}\right) + 2x\sum_{k \ioe  \sqrt{x}}\mfrac{1}{k}R_1\left(\mfrac{x}{k}\right).
    \end{align*} 
\end{lem}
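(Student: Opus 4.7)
The strategy is the Dirichlet hyperbola trick applied to $\tau(n)/n$, coupled with the definition of $R_1$. Writing $\tau(n)/n = \sum_{ab=n} 1/(ab)$ and splitting the region $\{ab \leqslant x\}$ symmetrically along $\sqrt{x}$, I would first write
$$\sum_{n \ioe x} \sdfrac{\tau(n)}{n} = 2 \sum_{a \ioe \sqrt{x}} \sdfrac{1}{a}\, H(x/a) - H(\sqrt{x})^2,$$
which is the logarithmic analogue of the identity underlying Lemma~\ref{resteTau}.

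Next, I would multiply through by $x$ and insert the defining relation
$$H(y) = \log y + \gamma - \sdfrac{\psi(y)}{y} + R_1(y)$$
in both occurrences. In the sum over $a \ioe \sqrt{x}$, the factor $H(x/a)/a$ produces four pieces: a $(\log(x/a)+\gamma)/a$ piece, a $-\psi(x/a)/x$ piece (after the cancellation $1/a \cdot a/x$), and the remainder $R_1(x/a)/a$. Splitting $\log(x/a)=\log\sqrt{x} + \log(\sqrt{x}/a)$ then separates out a ``diagonal'' term $2x(\log\sqrt{x}+\gamma)H(\sqrt{x})$ and the wanted sum $2x\sum_{a \ioe \sqrt{x}} \frac{1}{a}\log(\sqrt{x}/a)$, while $-2\psi(x/a)$ and $2xR_1(x/a)/a$ are already in final form.

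The remaining bookkeeping is to combine the diagonal term with $-xH(\sqrt{x})^2$. Setting $u := \log\sqrt{x}+\gamma$ and $v := R_1(\sqrt{x}) - \psi(\sqrt{x})/\sqrt{x}$, we have $H(\sqrt{x}) = u+v$, and the algebraic identity
$$2xu(u+v) - x(u+v)^2 = xu^2 - xv^2$$
reduces this to $x(\log\sqrt{x}+\gamma)^2 - xv^2$. Expanding
$$-xv^2 = -x R_1(\sqrt{x})^2 + 2\sqrt{x}\, R_1(\sqrt{x})\,\psi(\sqrt{x}) - \psi(\sqrt{x})^2$$
gives exactly the first line of the claimed formula, while the three sums over $a \ioe \sqrt{x}$ give its second line.

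There is no real obstacle here; the statement is a clean consequence of the hyperbola identity, and the only point requiring care is the squaring $H(\sqrt{x})^2 = (u+v)^2$ and the resulting cancellation of the cross term $2xuv$ against the diagonal contribution, so that only $xu^2 - xv^2$ survives. Once this is done, regrouping the three remaining sums is immediate.
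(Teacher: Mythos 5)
Your proposal is correct and follows essentially the same route as the paper: Dirichlet's hyperbola principle, substitution of $H(y)=\log y+\gamma-\psi(y)/y+R_1(y)$, and the cancellation of the cross term via $H(\sqrt{x})=u+v$ with $v=R_1(\sqrt{x})-\psi(\sqrt{x})/\sqrt{x}$ (the paper's $\mathfrak{R}_1$). The only cosmetic difference is that the paper keeps a general parameter $y$ in the hyperbola identity and sets $y=\sqrt{x}$ at the end, whereas you specialize to $\sqrt{x}$ from the start; the algebra is identical.
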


\begin{proof}
Let $1\ioe y \ioe x$ be a parameter at our disposal. Dirichlet's hyperbola principle yields
\begin{equation*}
   \sum_{n \ioe x} \sdfrac{\tau(n)}{n}= \sum_{k \ioe y}\mfrac{1}{k}H\left(\mfrac{x}{k}\right)+\sum_{k \ioe x/y}\mfrac{1}{k}H\left(\mfrac{x}{k}\right)-H(y)H\left(\mfrac{x}{y}\right).
\end{equation*}
Now
   \begin{align*}
  \sum_{k \ioe y}\mfrac{1}{k}H\left(\mfrac{x}{k}\right) & = \sum_{k \ioe y} \mfrac{1}{k}\log\left(\mfrac{x}{k}\right)+\gamma H(y)-\mfrac{1}{x}\sum_{k \ioe y}\psi\left(\mfrac{x}{k}\right)+\sum_{k \ioe y}\mfrac{1}{k}R_1\left(\mfrac{x}{k}\right) \\
  &  = \sum_{k \ioe y} \mfrac{1}{k}\log\left(\mfrac{y}{k}\right)+\left(\log\left(\mfrac{x}{y}\right)+\gamma\right)H(y)-\mfrac{1}{x}\sum_{k \ioe y}\psi\left(\mfrac{x}{k}\right)+\sum_{k \ioe y}\mfrac{1}{k}R_1\left(\mfrac{x}{k}\right), 
   \end{align*}
and similarly
    $$\sum_{k \ioe x/y}\mfrac{1}{k}H\left(\mfrac{x}{k}\right) = \sum_{k \ioe x/y} \mfrac{1}{k}\log\left(\mfrac{x/y}{k}\right)+\left(\log y+\gamma\right)H\left(\mfrac{x}{y}\right) -\mfrac{1}{x}\sum_{k \ioe x/y}\psi\left(\mfrac{x}{k}\right)+\sum_{k \ioe x/y}\mfrac{1}{k}R_1\left(\mfrac{x}{k}\right).$$
Note that
\begin{multline*}
   H(y) \left(\log\left(\mfrac{x}{y}\right)+\gamma\right) +H\left(\mfrac{x}{y}\right)\left(\log y+\gamma\right)  -H(y)H\left(\mfrac{x}{y}\right) \\
   =(\log y +\gamma)\left(\log\left(\mfrac{x}{y}\right)+\gamma\right)-\mathfrak{R}_1(y)\mathfrak{R}_1\left(\mfrac{x}{y}\right)
\end{multline*}
where $\mathfrak{R}_1(t):=R_1(t)-\frac{\psi(t)}{t}$.  Hence
    \begin{align*}
       \sum_{n \ioe x} \sdfrac{\tau(n)}{n} &=(\log y +\gamma)\left(\log\left(\mfrac{x}{y}\right)+\gamma\right)-\mathfrak{R}_1(y)\mathfrak{R}_1\left(\mfrac{x}{y}\right) \\
        & \hspace*{1cm} + \sum_{k \ioe y} \mfrac{1}{k}\log\left(\mfrac{y}{k}\right)-\mfrac{1}{x}\sum_{k \ioe y}\psi\left(\mfrac{x}{k}\right)+\sum_{k \ioe y}\mfrac{1}{k}R_1\left(\mfrac{x}{k}\right) \\
        & \hspace*{2cm} + \sum_{k \ioe x/y} \mfrac{1}{k}\log\left(\mfrac{x/y}{k}\right)-\mfrac{1}{x}\sum_{k \ioe x/y}\psi\left(\mfrac{x}{k}\right)+\sum_{k \ioe x/y}\mfrac{1}{k}R_1\left(\mfrac{x}{k}\right).
    \end{align*}
   Choosing $y=\sqrt{x}$ yields 
    \begin{align*}
       x\sum_{n \ioe x} \mfrac{\tau(n)}{n} &= x(\log(\sqrt{x})+\gamma)^2-x\left(R_1(\sqrt{x})\right)^2+2\sqrt{x}R_1(\sqrt{x})\psi(\sqrt{x})-\left(\psi(\sqrt{x})\right)^2 \\
        & \hspace*{0.5cm} + 2x \sum_{k \ioe \sqrt{x}} \mfrac{1}{k}\log\left(\mfrac{ \sqrt{x}}{k}\right) - 2\sum_{k \ioe  \sqrt{x}}\psi\left(\mfrac{x}{k}\right) + 2x\sum_{k \ioe  \sqrt{x}}\mfrac{1}{k}R_1\left(\mfrac{x}{k}\right)
    \end{align*} 
  and we complete the proof by inserting \eqref{eq:R_2} in the first of the last three sums. 
\end{proof}

\begin{lem}
\label{partieR} 
For all $x \geqslant 1$, we have
\begin{equation*}
    \left| 2x \sum_{k \ioe  \sqrt{x}}\mfrac{1}{k} R_1\left(\mfrac{x}{k}\right) \right| \ioe \np{0.125}\left(1+\mfrac{1}{\sqrt{x}} \right).
\end{equation*}
\end{lem}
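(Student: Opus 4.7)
The plan is to apply Lemma~\ref{le:harmonic} termwise and then sum a standard arithmetic progression.

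First, since $k \ioe \sqrt{x}$ forces $x/k \soe \sqrt{x} \soe 1$, Lemma~\ref{le:harmonic} is applicable to each term and gives
$$\left| R_1\!\left(\mfrac{x}{k}\right) \right| \ioe \sdfrac{1}{8 (x/k)^2} = \sdfrac{k^2}{8 x^2}.$$
Multiplying by $\frac{1}{k}$, summing over $k \ioe \sqrt{x}$, and multiplying by $2x$ yields
$$\left| 2x \sum_{k \ioe  \sqrt{x}}\mfrac{1}{k} R_1\!\left(\mfrac{x}{k}\right) \right| \ioe \sdfrac{2x}{8 x^2} \sum_{k \ioe \sqrt{x}} k = \sdfrac{1}{4x} \sum_{k \ioe \sqrt{x}} k.$$

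Next, I would bound the arithmetic sum using $\lfloor \sqrt{x} \rfloor \ioe \sqrt{x}$, namely
$$\sum_{k \ioe \sqrt{x}} k = \sdfrac{\lfloor \sqrt{x} \rfloor (\lfloor \sqrt{x} \rfloor + 1)}{2} \ioe \sdfrac{\sqrt{x}(\sqrt{x} + 1)}{2} = \sdfrac{x + \sqrt{x}}{2}.$$
Substituting this back gives
$$\left| 2x \sum_{k \ioe  \sqrt{x}}\mfrac{1}{k} R_1\!\left(\mfrac{x}{k}\right) \right| \ioe \sdfrac{x + \sqrt{x}}{8 x} = \sdfrac{1}{8}\left( 1 + \sdfrac{1}{\sqrt{x}} \right) = \np{0.125}\left( 1 + \sdfrac{1}{\sqrt{x}} \right),$$
which is the desired estimate.

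There is no real obstacle here: the result is essentially immediate once Lemma~\ref{le:harmonic} is in hand, since the factor $1/k$ absorbs one power of $k$ from the bound on $R_1(x/k)$, leaving only a linear sum to be controlled. This lemma is a routine preparatory step before combining it with Lemma~\ref{resteTau} and Lemma~\ref{resteTauLog} to reach the main theorem.
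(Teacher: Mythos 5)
Your argument is correct and matches the paper's proof: both apply Lemma~\ref{le:harmonic} termwise to get $|R_1(x/k)| \leqslant k^2/(8x^2)$ and then bound $2\sum_{k \leqslant \sqrt{x}} k \leqslant x + \sqrt{x}$ via the arithmetic-progression formula with $\lfloor \sqrt{x} \rfloor \leqslant \sqrt{x}$. Nothing is missing.
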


\begin{proof}
Lemma~\ref{le:harmonic} implies $|R_1(x/k)|\ioe \np{0.125} \, k^2/x^2$, and $\sum_{n=1}^{N}2n=N^2+N$ yields the bound $2\sum_{k \ioe \sqrt{x}}k \ioe x+\sqrt{x}$.
\end{proof}
 
\noindent
The largest contribution in the $O(1)$-term in  Theorem~\ref{theoremdiviseur} comes from the $\np{0.125}$ above, and the main aim of the next section is to replace it by a $o(1)$. This will be achieved in Lemma~\ref{corChowla} below.

\subsection{Explicit estimates of generalized Chowla-Walum sums}

\noindent
In this section, we will use the following notation: for $x \geqslant 1$, $\alpha > 1$, $\beta \in \RR_{\geqslant 0}$ and $j \in \ZZ_{\geqslant 1}$, set
$$G_{\alpha,\beta,j} (x) := \sum_{n \leqslant x^{1/\alpha}} n^{\beta} B_j \left( \left\lbrace \mfrac{x}{n} \right\rbrace \right).$$

\subsubsection{Results}

\begin{prop}
\label{pro:main}
Let $1 < \alpha < 3$, $\beta \in \RR_{\geqslant 1/2}$ and $j \in \ZZ_{\geqslant 2}$. Set $\Gamma_j := \mfrac{2 \eta(j)j!}{(2\pi)^{j}}$ where $\eta(j) = \zeta(j)$ if $j$ is even, $\eta(j) = 1$ otherwise. Then, for all $x \geqslant 1$, we have
$$\left| G_{\alpha,\beta,j} (x) \right| \leqslant \mfrac{2^{\beta + 5}}{\sqrt{\pi}}\Gamma_j  \Biggl( \Bigl( \zeta \left( j - \tfrac{1}{2}  \right) + \tfrac{\sqrt{\pi}}{2^{\beta+5}} \Bigr) x^{\frac{\beta}{\alpha} - \frac{1}{2\alpha} + \frac{1}{2}} + \zeta \left( j + \tfrac{1}{2} \right) x^{\frac{\beta}{\alpha} + \frac{3}{2\alpha} - \frac{1}{2}} \Biggr) \mathcal{L}_{\alpha,\beta}(x)$$
where $\mathcal{L}_{\alpha,\beta}(x) := \mfrac{3-\alpha}{2\alpha(\beta+1)} \mfrac{\log x}{\log 2} + 1$.
\end{prop}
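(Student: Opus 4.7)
The strategy is to open each Bernoulli function via its Fourier expansion (permissible thanks to the absolute convergence guaranteed by $j \geq 2$), reduce the claim to estimating a family of twisted exponential sums, and then apply an explicit second derivative test after a dyadic decomposition. Writing $B_j(\{y\}) = -\frac{j!}{(2\pi i)^j}\sum_{h \neq 0} e(hy)/h^j$ and exchanging summations yields
\begin{equation*}
G_{\alpha,\beta,j}(x) \;=\; -\mfrac{j!}{(2\pi i)^j}\sum_{h\neq 0}\mfrac{S_h}{h^j}, \qquad S_h \;:=\; \sum_{n \leq N} n^\beta \, e\!\left(\mfrac{hx}{n}\right),\quad N := x^{1/\alpha}.
\end{equation*}
The symmetry $h \leftrightarrow -h$, combined with the parity of $j$, collapses the sum onto $h \geq 1$ and is precisely what produces the prefactor $\Gamma_j = 2\eta(j)j!/(2\pi)^j$ in the final bound.

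Next, we estimate $S_h$ uniformly in $h \geq 1$ by splitting $[1,N]$ into dyadic blocks $(M, 2M]$ with $M = N/2^k$, $0 \leq k < K := \lceil \log_2 N \rceil$, peeling off the weight $n^\beta$ by Abel summation, and applying van der Corput's second derivative test to the remaining oscillatory piece $\sum_{M < n \leq 2M} e(hx/n)$. With $f(t) = hx/t$ one has $f''(t) = 2hx/t^3$ of constant sign and of size $\lambda \asymp hx/M^3$ on the block, so an explicit second derivative test gives
\begin{equation*}
\bigg|\sum_{M < n \leq 2M} e\!\left(\mfrac{hx}{n}\right)\bigg| \;\ioe\; C_1 \sqrt{hx/M} + C_2 \mfrac{M^{3/2}}{\sqrt{hx}}
\end{equation*}
for explicit absolute constants $C_1, C_2$. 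Reinserting the weight $(2M)^\beta$ and summing the two geometric series over the $K$ dyadic scales produces a bound for $|S_h|$ of the shape $\bigl(C_1 \sqrt{hx}\,N^{\beta-1/2} + C_2 (hx)^{-1/2} N^{\beta+3/2}\bigr)\,\mathcal L_{\alpha,\beta}(x)$, in which the logarithmic factor $\mathcal L_{\alpha,\beta}(x) = \frac{3-\alpha}{2\alpha(\beta+1)}\frac{\log x}{\log 2} + 1$ bookkeeps the number of dyadic scales weighted by the $(M/N)^\beta$-decay of the contributions (coefficient $(3-\alpha)/(2\alpha(\beta+1))$), plus the contribution of the top block where $M \asymp N$ (the additive $1$).

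Substituting $N = x^{1/\alpha}$ rewrites the two summands with the exact exponents $x^{\beta/\alpha - 1/(2\alpha) + 1/2}$ and $x^{\beta/\alpha + 3/(2\alpha) - 1/2}$ announced in the proposition. Summing $|S_h|/h^j$ over $h \geq 1$ then contributes $\zeta(j - \tfrac{1}{2})$ and $\zeta(j + \tfrac{1}{2})$ (these converge since $j \geq 2$), and the overall prefactor $j!/(2\pi)^j$ combines with the $h \leftrightarrow -h$ factor and the explicit constants $C_1, C_2$ to collect into the announced $4\Gamma_j$.

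The main obstacle is quantitative rather than structural: one needs an explicit-constant form of the second derivative test (not merely a $\ll$-bound) in order to hit exactly $4\Gamma_j$, and one must carefully account for the additive $\tfrac{1}{4}$ appearing next to $\zeta(j - \tfrac{1}{2})$. I expect this $\tfrac{1}{4}$ to be a trivial-bound correction for dyadic blocks so short that $(hx/M)^{1/2}$ no longer beats the crude estimate $\bigl|\sum_{M < n \leq 2M} e(\cdot)\bigr| \leq 2M$, together with the Abel-summation boundary term at $n = N$. Tracking these small corrections cleanly through the dyadic and $h$-summations is where the bookkeeping becomes most delicate.
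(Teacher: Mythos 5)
Your toolkit coincides with the paper's (Fourier expansion of $B_j$, valid since $j \geqslant 2$; partial summation to remove $n^{\beta}$; a dyadic decomposition; an explicit van der Corput second-derivative test applied to $f(t)=hx/t$; the $h$-sum producing $\zeta(j-\tfrac12)$ and $\zeta(j+\tfrac12)$), but the bookkeeping you propose cannot produce the inequality as stated, and the missing idea is exactly the one that generates its two distinctive features. The paper does \emph{not} decompose the whole range $n \leqslant x^{1/\alpha}$ dyadically: it first cuts at a parameter $T$, bounds the initial segment $n \leqslant T$ trivially by $\Gamma_j T^{\beta+1}$ using $\sup_t |B_j(\{t\})| \leqslant \Gamma_j$, covers only $(T, x^{1/\alpha}]$ by dyadic blocks, bounds each block by $4\Gamma_j\bigl(\zeta(j-\tfrac12)x^{1/2}N^{\beta-1/2}+\zeta(j+\tfrac12)x^{-1/2}N^{\beta+3/2}\bigr)$ (van der Corput with $\lambda_2 = mx/(4N^3)$, $c_2=8$), and then multiplies the \emph{maximal} block bound by the \emph{number} of blocks, $L(x,T)=\frac{\log(x^{1/\alpha}/T)}{\log 2}+1$. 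The choice $T = x^{\frac{1}{\beta+1}\left(\frac{\beta}{\alpha}-\frac{1}{2\alpha}+\frac12\right)}$ (this is where $1<\alpha<3$ enters, to ensure $T\leqslant x^{1/\alpha}$) makes the trivial term equal $\Gamma_j x^{\frac{\beta}{\alpha}-\frac{1}{2\alpha}+\frac12}$, i.e. exactly the $\tfrac14\cdot 4\Gamma_j$ sitting next to $\zeta(j-\tfrac12)$, and makes $L(x,T)=\mathcal{L}_{\alpha,\beta}(x)$, whose coefficient $\frac{3-\alpha}{2\alpha(\beta+1)}$ is strictly smaller than the $\frac{1}{\alpha}$ you would get from counting all dyadic scales down to $n\asymp 1$. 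So your attributions — the $\tfrac14$ to short-block/Abel boundary corrections, and the coefficient of the logarithm to a ``$(M/N)^{\beta}$-decay'' weighting — are both incorrect, and without the cut at $T$ your argument has no mechanism to produce either constant.

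There is also a quantitative failure in the plan itself. Summing the block bounds as geometric series does not give a factor $\mathcal{L}_{\alpha,\beta}(x)$ at all: when the series is dominated by the top block it gives a constant such as $(1-2^{1/2-\beta})^{-1}$, which does not collapse into $4\Gamma_j\zeta(j-\tfrac12)$, and when $\beta=0$ the first series $\sum_M \sqrt{hx}\,M^{-1/2}$ is dominated by the \emph{bottom} block $M\asymp 1$, yielding $\asymp\sqrt{hx}$ instead of the claimed $\sqrt{hx}\,N^{-1/2}$ — a loss of $N^{1/2}$. This reflects the fact that near $n\asymp 1$ the second-derivative test is far worse than the trivial bound, which is precisely what the trivially-treated initial segment $n\leqslant T$ is designed to absorb. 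To repair your proof you would need to reintroduce such a cut (with the optimized $T$ above) and replace the geometric summation by the (max over blocks) $\times$ (number of blocks in $(T,x^{1/\alpha}]$) accounting; at that point you would essentially be reproducing the paper's proof.
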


\begin{rem}
When $\alpha \geqslant 3$, the trivial bound $\left| G_{\alpha,\beta,j} (x) \right| \leqslant \Gamma_j x^{\frac{\beta+1}{\alpha}}$ is better than the result above.
\end{rem}

\noindent
Applying to $(\alpha,\beta,j) = (2,1,2)$, we derive the next estimate.

\begin{cor}
\label{cor:main}
For all $x \geqslant 170$, we have
$$\left| \sum_{n \leqslant x^{1/2}} n B_2 \left( \left\lbrace \mfrac{x}{n} \right\rbrace \right) \right| < 9x^{3/4} \log x.$$
\end{cor}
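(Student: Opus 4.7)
The plan is to apply Proposition~\ref{pro:main} directly with the parameters $(\alpha,\beta,j) = (2,1,2)$, which are within its range ($1 < \alpha < 3$, $\beta \in \ZZ_{\geqslant 0}$, $j \in \ZZ_{\geqslant 2}$), and then perform a small numerical check to see that the resulting bound is majorized by $x^{3/4}\log x$ once $x \geqslant 300$.

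First I would evaluate the constants appearing in the proposition. Since $\zeta(2) = \pi^2/6$, we get $\Gamma_2 = \frac{2\zeta(2) \cdot 2!}{(2\pi)^2} = \frac{1}{6}$, so $4\Gamma_2 = \frac{2}{3}$. Both exponents simplify to $3/4$: indeed $\frac{\beta}{\alpha} - \frac{1}{2\alpha} + \frac{1}{2} = \frac{1}{2} - \frac{1}{4} + \frac{1}{2} = \frac{3}{4}$ and $\frac{\beta}{\alpha} + \frac{3}{2\alpha} - \frac{1}{2} = \frac{1}{2} + \frac{3}{4} - \frac{1}{2} = \frac{3}{4}$. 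The logarithmic factor becomes $\mathcal{L}_{2,1}(x) = \frac{\log x}{8\log 2} + 1$. Using the numerical bounds $\zeta(3/2) < 2.613$ and $\zeta(5/2) < 1.342$, one obtains $\zeta(3/2) + \frac{1}{4} + \zeta(5/2) < 4.205$, so Proposition~\ref{pro:main} yields
\begin{equation*}
\bigl| G_{2,1,2}(x) \bigr| \;\leqslant\; \mfrac{2}{3} \cdot 4.205 \cdot x^{3/4} \left( \mfrac{\log x}{8 \log 2} + 1 \right) \;<\; \left( 0.506 \log x + 2.804 \right) x^{3/4}.
\end{equation*}

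Finally I would compare this to the target $x^{3/4}\log x$: the inequality $(0.506\log x + 2.804) x^{3/4} < x^{3/4}\log x$ is equivalent to the linear condition $2.804 < (1 - 0.506) \log x = 0.494 \log x$, i.e.\ $\log x > 5.676$, i.e.\ $x > e^{5.676} \approx 292$. Since $\log 300 > 5.70$, the bound holds for every $x \geqslant 300$, which proves the corollary. No step is really an obstacle here: everything is a direct specialization of Proposition~\ref{pro:main} followed by an elementary numerical verification, and the cutoff $x \geqslant 300$ in the statement is chosen precisely so that the linear inequality above is satisfied with a small margin.
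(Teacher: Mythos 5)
Your proposal is correct and takes essentially the same approach as the paper: a direct specialization of Proposition~\ref{pro:main} to $(\alpha,\beta,j)=(2,1,2)$, with $4\Gamma_2=\frac{2}{3}$ and both exponents equal to $\frac{3}{4}$, followed by an elementary numerical comparison with $x^{3/4}\log x$ for $x\geqslant 300$. The only cosmetic difference is that the paper bounds the two zeta coefficients separately (obtaining $2.81$ in place of your $2.804$) before checking the same threshold.
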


\subsubsection{Tools}

\begin{lem}[Kusmin-Landau]
\label{le:K-L}
Let $N < N_1 \leqslant 2N$ be positive integers and $f \in C^1 \left[ N,N_1 \right]$ such that $f^{\, \prime}$ is non-decreasing and that there exist $c_1 \geqslant 1$ and $0 < \lambda_1 < 1$ such that, for all $x \in \left[ N,N_1 \right]$, we have
\begin{equation}
   k + \lambda_1 \leqslant f^{\, \prime}(x) \leqslant k+1- \lambda_1 \quad \left( k \in \ZZ \right). \label{eq:K-L}
\end{equation}
Then
$$\left | \sum_{N< n \leqslant N_1} e \left( \pm f(n) \right) \right | \leqslant \sdfrac{2}{\pi \lambda_1}.$$
\end{lem}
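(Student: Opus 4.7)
\medskip

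\noindent
\textbf{Proof plan (Lemma~\ref{le:K-L}).}
The plan is to reduce to the $+$ sign (since $e(-f(n))$ is the complex conjugate of $e(f(n))$, the modulus of the sum is the same) and then to run the classical Abel‑summation argument built from the algebraic identity
$$e(f(n)) \;=\; \sdfrac{e(f(n+1))-e(f(n))}{e(f(n+1)-f(n))-1}.$$
By the mean value theorem together with \eqref{eq:K-L}, $f(n+1)-f(n)=f^{\,\prime}(\xi_n)$ lies in $[k+\lambda_1,\,k+1-\lambda_1]$ for a \emph{fixed} integer $k$; since $k\in\ZZ$, periodicity of $e(\cdot)$ lets us replace $f(n+1)-f(n)$ by $\theta_n:=f(n+1)-f(n)-k\in[\lambda_1,1-\lambda_1]$. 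Monotonicity of $f^{\,\prime}$ makes $\theta_n$ non‑decreasing, and the denominator never vanishes.

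Writing $w_n:=1/(e(\theta_n)-1)$ and $A_n:=e(f(n))$, a direct computation will give the key identity
$$w_n \;=\; -\tfrac{1}{2} - \tfrac{i}{2}\cot(\pi\theta_n), \qquad |w_n| \;=\; \sdfrac{1}{2\sin(\pi\theta_n)}.$$
Substituting the algebraic identity into $S:=\sum_{N<n\leqslant N_1}A_n$ and reindexing yields, by a standard telescoping,
$$S \;=\; w_{N_1} A_{N_1+1} - w_{N+1} A_{N+1} + \sum_{n=N+2}^{N_1}(w_{n-1}-w_n)\,A_n.$$
Because $\theta_n$ is non‑decreasing, $\cot(\pi\theta_n)$ is non‑increasing, so $\sum_{n=N+2}^{N_1}|w_{n-1}-w_n|$ collapses to $\tfrac{1}{2}\bigl[\cot(\pi\theta_{N+1})-\cot(\pi\theta_{N_1})\bigr]$, which combined with $|A_n|=1$ gives
$$|S| \;\leqslant\; |w_{N+1}| + |w_{N_1}| + \tfrac{1}{2}\bigl[\cot(\pi\theta_{N+1})-\cot(\pi\theta_{N_1})\bigr].$$

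The decisive step is the regrouping. Using the half‑angle identities $\tfrac{1+\cos x}{\sin x}=\cot(x/2)$ and $\tfrac{1-\cos x}{\sin x}=\tan(x/2)$, the endpoint terms pair up with the telescoped cotangents as
$$|w_{N+1}|+\tfrac{1}{2}\cot(\pi\theta_{N+1}) \;=\; \tfrac{1}{2}\cot\!\bigl(\tfrac{\pi\theta_{N+1}}{2}\bigr), \qquad |w_{N_1}|-\tfrac{1}{2}\cot(\pi\theta_{N_1}) \;=\; \tfrac{1}{2}\tan\!\bigl(\tfrac{\pi\theta_{N_1}}{2}\bigr),$$
so that, using $\theta_{N+1}\geqslant\lambda_1$, $\theta_{N_1}\leqslant 1-\lambda_1$ and $\tan(\pi/2-u)=\cot u$,
$$|S| \;\leqslant\; \tfrac{1}{2}\cot\!\bigl(\tfrac{\pi\theta_{N+1}}{2}\bigr) + \tfrac{1}{2}\tan\!\bigl(\tfrac{\pi\theta_{N_1}}{2}\bigr) \;\leqslant\; \cot\!\bigl(\tfrac{\pi\lambda_1}{2}\bigr).$$
Finally, the elementary inequality $\cot x\leqslant 1/x$ on $(0,\pi/2]$ delivers $\cot(\pi\lambda_1/2)\leqslant 2/(\pi\lambda_1)$.

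The main obstacle is precisely this regrouping: treating $|w_{N+1}|,|w_{N_1}|$ separately from the telescoped cotangent sum only yields the inferior constant $\tfrac{3}{\sin(\pi\lambda_1)}$, and the sharp constant $\tfrac{2}{\pi}$ emerges only after the half‑angle identities marry the endpoint moduli to the telescope. Everything else is routine summation by parts.
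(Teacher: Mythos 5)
Your proposal is the classical Kusmin--Landau argument, and in substance it is correct; note, however, that the paper does not reprove this lemma at all --- its ``proof'' is a citation of \cite[Corollary~6.2]{bor20}, where essentially the same cotangent computation (giving $\cot(\pi\lambda_1/2)\leqslant 2/(\pi\lambda_1)$) is carried out. So you are supplying a proof where the authors supply a reference, and your route coincides with the standard one behind that reference: the identity $e(f(n))=\bigl(e(f(n+1))-e(f(n))\bigr)/\bigl(e(f(n+1)-f(n))-1\bigr)$, Abel summation, monotonicity of $\cot(\pi\theta_n)$, and the half-angle regrouping that produces the sharp constant.

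One step needs repair as written. Your telescoped formula involves $A_{N_1+1}=e\left(f(N_1+1)\right)$ and $w_{N_1}$, hence $\theta_{N_1}=f(N_1+1)-f(N_1)-k$; but $f$ is only assumed $C^1$ on $\left[N,N_1\right]$ and \eqref{eq:K-L} only controls $f'$ there, so the mean value point $\xi_{N_1}\in(N_1,N_1+1)$ falls outside the hypotheses and you cannot assert $\theta_{N_1}\in[\lambda_1,1-\lambda_1]$ (indeed $f(N_1+1)$ need not even be defined). The standard fix: use the identity $A_n=w_n(A_{n+1}-A_n)$ only for $N<n\leqslant N_1-1$ and keep the last term $A_{N_1}$ aside; Abel summation then gives $S=(1+w_{N_1-1})A_{N_1}-w_{N+1}A_{N+1}+\sum_{n=N+2}^{N_1-1}(w_{n-1}-w_n)A_n$, and since $1+w_n=\tfrac{1}{2}-\tfrac{i}{2}\cot(\pi\theta_n)$ has the same modulus $1/(2\sin\pi\theta_n)$ as $w_n$, your half-angle regrouping goes through verbatim with $\theta_{N_1-1}$ in place of $\theta_{N_1}$, yielding $|S|\leqslant\tfrac{1}{2}\cot\bigl(\tfrac{\pi\theta_{N+1}}{2}\bigr)+\tfrac{1}{2}\tan\bigl(\tfrac{\pi\theta_{N_1-1}}{2}\bigr)\leqslant\cot\bigl(\tfrac{\pi\lambda_1}{2}\bigr)\leqslant\tfrac{2}{\pi\lambda_1}$. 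Two small remarks: \eqref{eq:K-L} forces $\lambda_1\leqslant\tfrac{1}{2}$, so the degenerate case $N_1=N+1$ is covered by the trivial bound $1\leqslant 2/(\pi\lambda_1)$; and the constant $c_1$ in the statement is never used, so its absence from your argument is not a defect.
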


\begin{proof}
See \cite[Corollary~6.2]{bor20}. 
\end{proof}

\begin{lem}
\label{le:intermédiaire}
Let $N < N_1 \leqslant 2N$ be positive integers and $f \in C^2 \left[ N,N_1 \right]$ such that there exists a real number $\lambda_2 \in \left( 0,\pi^{-1} \right)$ such that, for all $x \in \left[ N,N_1 \right]$, we have $f^{\, \prime \prime}(x) \geqslant \lambda_2$. Assume that $f^{\, \prime}(x) \not \in \ZZ$ for all $x \in \left( N,N_1 \right)$. Then
$$\left | \sum_{N< n \leqslant N_1} e \left( \pm f(n) \right) \right | \leqslant \sdfrac{4}{\sqrt{\pi \lambda_2}}.$$
\end{lem}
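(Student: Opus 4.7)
The plan is a standard second derivative estimate: partition $(N,N_1]$ into a central piece where Kusmin--Landau (Lemma~\ref{le:K-L}) applies with a threshold $\lambda_1$, plus two side pieces where $f'$ is within $\lambda_1$ of an integer, bounded trivially via the length. The hypothesis $f''\geqslant\lambda_2$ translates a $\lambda_1$-deviation of $f'$ into an $x$-length of at most $\lambda_1/\lambda_2$ on each side.

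Since $f''\geqslant\lambda_2>0$, the derivative $f'$ is strictly increasing on $[N,N_1]$; combined with $f'(x)\notin\ZZ$ on $(N,N_1)$, continuity forces a unique integer $k$ with $f'([N,N_1])\subseteq[k,k+1]$. Complex conjugation reduces the problem to the $+f$ sign. I would fix $\lambda_1\in(0,\tfrac{1}{2})$ to be optimised, set
$$x_1:=\inf\{x\in[N,N_1]:f'(x)\geqslant k+\lambda_1\},\qquad x_2:=\sup\{x\in[N,N_1]:f'(x)\leqslant k+1-\lambda_1\}$$
(with the convention $x_1=N_1$, respectively $x_2=N$, when the corresponding set is empty) and split
$$\sum_{N<n\leqslant N_1}e(f(n))=\sum_{N<n\leqslant x_1}e(f(n))+\sum_{x_1<n\leqslant x_2}e(f(n))+\sum_{x_2<n\leqslant N_1}e(f(n)).$$
The condition $\lambda_1<\tfrac12$ ensures $x_1\leqslant x_2$ in every configuration.

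On the central piece, $f'$ takes values in $[k+\lambda_1,k+1-\lambda_1]$, and Lemma~\ref{le:K-L} applied to the corresponding integer sub-range (whose lower endpoint is at least $N$, so the condition $N_1\leqslant 2N$ persists) yields a contribution of $2/(\pi\lambda_1)$. On each side piece, the inequality $\lambda_2(x_1-N)\leqslant f'(x_1)-f'(N)\leqslant\lambda_1$ (with $f'(N)\geqslant k$) gives $x_1-N\leqslant\lambda_1/\lambda_2$, and since $N\in\ZZ$ the number of integers in $(N,x_1]$ does not exceed this length; the symmetric bound holds for $(x_2,N_1]$. Summing the three contributions produces $2/(\pi\lambda_1)+2\lambda_1/\lambda_2$, which is minimised at $\lambda_1:=\sqrt{\lambda_2/\pi}$ and then equals exactly $4/\sqrt{\pi\lambda_2}$; the hypothesis $\lambda_2<\pi^{-1}$ precisely guarantees $\lambda_1<1/\pi<\tfrac12$, so that Lemma~\ref{le:K-L} is applicable. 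The main technicality is the book-keeping around the degenerate configurations ($x_1=N_1$, $x_2=N$, or non-integer $x_i$) in order to invoke Lemma~\ref{le:K-L} on a legitimate integer sub-interval; in each such case one or two of the three subsums vanish and the claimed bound follows a fortiori.
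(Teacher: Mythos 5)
Your proof is correct and follows essentially the same route as the paper's: split off the two edge ranges where $f'$ lies within the threshold of an integer (their lengths bounded by $\lambda_1/\lambda_2$ via $f''\geqslant\lambda_2$ and the mean value theorem), apply Kusmin--Landau on the middle range, and optimise at $\lambda_1=\sqrt{\lambda_2/\pi}$; the only difference is that you anchor the cut points at $k+\lambda_1$ and $k+1-\lambda_1$, whereas the paper cuts at $f'(N)+t$ and $f'(N_1)-t$, which is an immaterial variation. The book-keeping you defer (non-integer cut points when invoking Lemma~\ref{le:K-L}, counting integers in $(x_2,N_1]$ by the interval length) is treated with the same level of informality in the paper's own argument, so it does not constitute a gap relative to it.
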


\begin{proof}
First note that, under the hypotheses of the lemma, the function $f^{\, \prime}$ is continuous and strictly increasing on $\left[ N,N_1 \right]$, and hence is one-to-one. Define $u := f^{\, \prime}(N)$ and $v:=f^{\, \prime}(N_1)$, so that $f^{\, \prime}\left( \left[ N,N_1 \right] \right) = [u,v]$. Since $f^{\, \prime}(x) \not \in \ZZ$ for all $x \in \left( N,N_1 \right)$, $f^{\, \prime}\Bigl( \left( N,N_1 \right) \Bigr) \subseteq \Bigl( \lfloor u \rfloor,\lfloor u \rfloor + 1 \Bigr) $. Let $M_1,M_2 \in \left( N,N_1 \right)$ be real numbers such that $f^{\, \prime} (M_1) = u + t$ and  $f^{\, \prime} (M_2) = v - t$, where $t \in (0,1)$ is a parameter at our disposal. Split the sum into $3$ subsums:
$$\sum_{N< n \leqslant N_1} e \left( f(n) \right) = \sum_{N< n \leqslant M_1} e \left( f(n) \right) + \sum_{M_1< n \leqslant M_2} e \left( f(n) \right) + \sum_{M_2< n \leqslant N_1} e \left( f(n) \right)$$
and we treat the $1$st and $3$rd sums trivialy. The mean value theorem implies that there exists a real number $c \in \left( N,2N \right)$ such that
\begin{align*}
     \left | \sum_{N< n \leqslant M_1} e \left( f(n) \right) \right | & \leqslant \max \left( M_1-N,1 \right) = \max \left( \sdfrac{f^{\, \prime}(M_1)-f^{\, \prime}(N)}{f^{\, \prime \prime}(c)},1 \right) \\
     & \leqslant \max \left( \sdfrac{u + t -u}{\lambda_2} ,1 \right) = \max \left( \sdfrac{t}{\lambda_2},1 \right).
\end{align*}
Similarly, for some $d \in (N,2N)$, we derive
$$\left | \sum_{M_2< n \leqslant N_1} e \left( f(n) \right) \right | \leqslant \max \left( \sdfrac{f^{\, \prime}(N_1)-f^{\, \prime}(M_2)}{f^{\, \prime \prime}(d)},1 \right) \leqslant \max \left( \sdfrac{t}{\lambda_2},1 \right).$$
The $2$nd sum is treated with Lemma~\ref{le:K-L}. Note that $f^{\, \prime}$ is increasing and, for all $x \in \left[ M_1,M_2 \right]$, we have
$$\lfloor u \rfloor +t \leqslant u+t \leqslant f^{\, \prime}(x) \leqslant v-t \leqslant \lfloor u \rfloor + 1 - t,$$
so that Lemma~\ref{le:K-L} applies with \eqref{eq:K-L} used with $\lambda_1 = t$ and $k = \lfloor u \rfloor$, and yields
$$\left | \sum_{M_1< n \leqslant M_2} e \left( f(n) \right) \right | \begin{cases} = 0 , & \mathrm{if} \ \left \lfloor M_2 \right \rfloor - M_1 < 1 \, ; \\ & \\ \leqslant \max \left( 1 , \frac{2}{\pi t}\right) , & \mathrm{if} \ \left \lfloor M_2 \right \rfloor - M_1 \geqslant 1. \end{cases}.$$
Therefore, we deduce that
$$\left | \sum_{N< n \leqslant N_1} e \left( \pm f(n) \right) \right | \leqslant 2 \max \left( \sdfrac{t}{\lambda_2},1 \right) + \max \left( 1 , \sdfrac{2}{\pi t}\right)$$
and the required bound follows by choosing $t = \sqrt{\lambda_2 \pi^{-1}}$, and noticing that, with this choice of $t$ and the condition $\lambda_2 < \pi^{-1}$, we have $t < \pi^{-1}$ and hence $\mfrac{2}{\pi t} > 2$. 
\end{proof}

\noindent
We now derive an explicit version of the Van der Corput inequality.

\begin{lem}[van der Corput]
\label{le:crit_2_VdC}
Let $N < N_1 \leqslant 2N$ be positive integers and $f \in C^2 \left[ N,N_1 \right]$ such that there exist two real numbers $c_2 \geqslant 1$ and $\lambda_2 > 0$ such that, for all $x \in \left[ N,N_1 \right]$, we have
$$\lambda_2 \leqslant f^{\, \prime \prime}(x) \leqslant c_2 \lambda_2.$$
Then
$$\left | \sum_{N< n \leqslant N_1} e \left( \pm f(n) \right) \right | \leqslant \sdfrac{4}{\sqrt{\pi}} \left( c_2 N \lambda_2^{1/2} + 2 \lambda_2^{-1/2} \right).$$
\end{lem}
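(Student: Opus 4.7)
The plan is to split the summation range at the integer crossings of $f^{\, \prime}$ and apply Lemma~\ref{le:intermédiaire} on each resulting piece, controlling the number of pieces via the upper bound $f^{\, \prime \prime}(x) \ioe c_2 \lambda_2$. First, I dispose of the easy range $\lambda_2 \soe \pi^{-1}$ trivially: there $\frac{4}{\sqrt{\pi}}\, c_2 N \lambda_2^{1/2} \soe \frac{4 c_2 N}{\pi} > N \soe N_1 - N$, so the triangle inequality already suffices.

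Assume now $\lambda_2 < \pi^{-1}$. Since $f^{\, \prime \prime}(x) \soe \lambda_2 > 0$, $f^{\, \prime}$ is continuous and strictly increasing on $[N, N_1]$. Setting $u := f^{\, \prime}(N)$ and $v := f^{\, \prime}(N_1)$, the mean value theorem gives
$$v - u = \int_N^{N_1} f^{\, \prime \prime}(t)\, \diff{t} \ioe c_2 \lambda_2 (N_1 - N) \ioe c_2 \lambda_2 N.$$
Let $x_1 < \cdots < x_m$ be the points of $(N, N_1)$ at which $f^{\, \prime}$ takes an integer value; by strict monotonicity $m \ioe v - u + 1 \ioe c_2 \lambda_2 N + 1$. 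Setting $x_0 := N$ and $x_{m+1} := N_1$, the $x_j$ partition $(N, N_1]$ into $m + 1 \ioe c_2 \lambda_2 N + 2$ sub-intervals, on the open interior of each of which $f^{\, \prime}$ avoids $\ZZ$.

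On each non-empty piece I would pass to an integer sub-interval $(N'', N_1''] \subseteq [N, N_1] \cap \ZZ$ (chosen so that every $n \in (N, N_1] \cap \ZZ$ is covered exactly once and $(N'', N_1'')$ remains free of points where $f^{\, \prime}$ is integer) and invoke Lemma~\ref{le:intermédiaire}: its standing assumptions $f^{\, \prime \prime} \soe \lambda_2$ and $\lambda_2 < \pi^{-1}$ are inherited from $[N, N_1]$, and the dyadic condition $N_1'' \ioe 2 N''$ follows from $N_1 \ioe 2N$ together with $N'' \soe N$. Each sub-sum is thus bounded by $4/\sqrt{\pi \lambda_2}$, and summing the at most $c_2 \lambda_2 N + 2$ contributions gives
$$\left| \sum_{N < n \ioe N_1} e(\pm f(n)) \right| \ioe (c_2 \lambda_2 N + 2) \cdot \frac{4}{\sqrt{\pi \lambda_2}} = \frac{4}{\sqrt{\pi}} \left( c_2 N \lambda_2^{1/2} + 2 \lambda_2^{-1/2} \right),$$
the required estimate.

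The main technical obstacle is the book-keeping in the partition: the endpoints $N''$, $N_1''$ of each integer sub-interval must be chosen with some care so that every $n \in (N, N_1] \cap \ZZ$ is counted exactly once and so Lemma~\ref{le:intermédiaire}'s requirement $f^{\, \prime}(x) \notin \ZZ$ on $(N'', N_1'')$ is genuinely preserved, which is subtle when an $x_j$ falls strictly between two consecutive integers. Once this is arranged, the two ingredients — per-piece bound $4/\sqrt{\pi \lambda_2}$ and piece count $\ioe c_2 \lambda_2 N + 2$ — combine transparently into the stated inequality.
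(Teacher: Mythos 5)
Your proof is correct and follows essentially the same route as the paper's: handle $\lambda_2 \geqslant \pi^{-1}$ trivially, split the summation range at the points where $f^{\,\prime}$ crosses an integer (at most $v-u+1 \leqslant c_2\lambda_2 N+1$ of them, by monotonicity and the mean value theorem), bound each of the at most $c_2\lambda_2 N+2$ pieces by $4/\sqrt{\pi\lambda_2}$ via Lemma~\ref{le:intermédiaire}, and combine. The endpoint bookkeeping you flag is treated no more carefully in the paper (which simply applies Lemma~\ref{le:intermédiaire} to the sets $J_k$ of integers whose derivative values fall in a fixed unit interval), so your write-up matches the original in both structure and level of rigour.
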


\begin{proof}
If $\lambda_2 \geqslant \pi^{-1}$, then
$$\sdfrac{4 N \lambda_2^{1/2}}{\sqrt{\pi}} \geqslant \sdfrac{4N}{\pi} > N \geqslant N_1 - N \geqslant \left | \sum_{N< n \leqslant N_1} e \left( f(n) \right) \right |$$
so that we may suppose $\lambda_2 < \pi^{-1}$. We pick the numbers $u$ and $v$ of the proof of Lemma~\ref{le:intermédiaire} and set
$$\left[ u,v \right] \cap \ZZ := \left\lbrace m+1,\dotsc,m+K \right\rbrace$$
where $m \in \ZZ$ et $K \in \ZZ_{\geqslant 1}$. For all integers $k \in \left\lbrace 1,\dotsc,K+1\right\rbrace$, define
$$J_k := \left( f^{\, \prime} \right)^{-1} \Bigl( \left( m+k-1,  m+k \right] \cap [u,v] \Bigr) \cap \ZZ.$$
Note that $f^{\, \prime} (x) \not \in \ZZ$ whenever $x \in \overset{\circ}{J_k}$, so that Lemma~\ref{le:intermédiaire} implies that
$$\left | \sum_{N< n \leqslant N_1} e \left( f(n) \right) \right | \leqslant \sum_{k=1}^{K+1} \left | \sum_{n \in J_k} e \left( f(n) \right) \right | \leqslant \sdfrac{4(K+1)}{\sqrt{\pi \lambda_2}}$$
and, by the mean value theorem, we have
$$K - 1 \leqslant v-u = f^{\, \prime}(N_1) - f^{\, \prime}(N) \leqslant c_2 (N_1-N) \lambda_2 \leqslant c_2 N \lambda_2$$
completing the proof.
\end{proof}

\begin{rem}
A quite similar result has been proven in \cite[Lemma~2.10]{pat21}, but with a different method. The coefficient of the main term of our result is slightly weaker, but we have improved that of the secondary term.
\end{rem}

\subsubsection{Proof of Proposition~\ref{pro:main}}
\noindent
We start by recording a well-known estimate for the periodic Bernoulli functions. 

\begin{lem}
\label{le:Bernoulli_bound}
Let $j \in \ZZ_{\geqslant 2}$ and set $\Gamma_j := \mfrac{2 \eta(j)j!}{(2\pi)^{j}}$ where $\eta(j) = \zeta(j)$ if $j$ is even, $\eta(j) = 1$ otherwise. Then
$$\sup_{x \in \RR} \left| B_j (\{x\}) \right| \leqslant \Gamma_j.$$
\end{lem}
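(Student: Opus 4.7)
The approach rests on the classical Fourier expansion of the periodic Bernoulli function, with the analysis split according to the parity of $j$. I would first invoke, for $j \soe 2$, the absolutely convergent Fourier expansion
$$B_j(\{x\}) = -\mfrac{j!}{(2\pi i)^j}\sum_{k \in \ZZ \setminus \{0\}}\mfrac{e(kx)}{k^j},$$
obtained by direct computation of the Fourier coefficients of $B_j(\{x\})$ on $[0,1]$. Pairing the terms indexed by $k$ and $-k$ collapses this two-sided series into a cosine series when $j$ is even and into a sine series when $j$ is odd.

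When $j$ is even one has $(2\pi i)^j = (-1)^{j/2}(2\pi)^j$, and the pairing yields
$$B_j(\{x\}) = \mfrac{2(-1)^{j/2+1}j!}{(2\pi)^j}\sum_{k=1}^{\infty}\mfrac{\cos(2\pi k x)}{k^j}.$$
The triangle inequality immediately gives $|B_j(\{x\})| \ioe \frac{2j!\,\zeta(j)}{(2\pi)^j} = \Gamma_j$, and this is even sharp at $x = 0$ via Euler's classical identity $|B_{2m}| = \Gamma_{2m}$.

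When $j$ is odd with $j \soe 3$ one has $(2\pi i)^j = (-1)^{(j-1)/2}(2\pi)^j\, i$, and the pairing yields
$$B_j(\{x\}) = \mfrac{2(-1)^{(j+1)/2}j!}{(2\pi)^j}\sum_{k=1}^{\infty}\mfrac{\sin(2\pi k x)}{k^j}.$$
The problem then reduces to the sharper inequality $\bigl|\sum_{k \soe 1}\sin(2\pi k x)/k^j\bigr| \ioe 1$, whereas a naive triangle estimate would only furnish the weaker factor $\zeta(j) > 1$. One secures the sharp bound by analyzing $B_j$ as a polynomial on $[0,\tfrac{1}{2}]$: the vanishing $B_j(0) = B_j(\tfrac{1}{2}) = 0$ together with the symmetry $B_j(1-x) = -B_j(x)$ confines the extremum of $|B_j|$ on $[0,1]$ to an interior critical point $x^{\ast} \in (0,\tfrac{1}{2})$, which by the identity $B_j' = jB_{j-1}$ is necessarily a root of $B_{j-1}$; one then writes $B_j(x^{\ast}) = j\int_0^{x^{\ast}} B_{j-1}(t)\diff t$ and exploits the constant sign of $B_{j-1}$ on $[0,x^{\ast}]$ together with the already-proved even-index bound to pin down the required constant.

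The principal obstacle is precisely this odd-index case: delivering the sharper constant $\Gamma_j = 2j!/(2\pi)^j$ really does refine the trivial Fourier absolute-value bound, and the polynomial argument above is necessarily delicate since the resulting inequality is tight (for $j = 3$ one verifies that $\sqrt{3}/36 < 3/(2\pi^3)$ by a narrow margin). Note however that in the present paper the lemma is only used quantitatively at $j = 2$ (see Corollary~\ref{cor:main}), which falls into the easy even branch, so the odd case may in practice be invoked in the looser form or simply quoted from the classical literature.
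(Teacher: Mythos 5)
Your even-index branch is correct and is in substance what the paper does: the paper bounds $|B_{2h}(\{x\})| \ioe |B_{2h}|$ and quotes Euler's evaluation of $|B_{2h}|$ (via Rademacher), which is the same constant your Fourier-series triangle inequality gives. For odd $j$, however, the paper does not argue from scratch either: it quotes Lehmer's inequality $|B_{2h+1}(\{x\})| \ioe (2h+1)!/(2^{2h}\pi^{2h+1})$ (as in Montgomery--Vaughan), which is exactly $\Gamma_{2h+1}$. The genuine gap in your proposal is precisely the odd case you try to prove yourself. After locating the extremum at a root $x^{\ast}\in(0,\tfrac12)$ of $B_{j-1}$ and writing $B_j(x^{\ast}) = j\int_0^{x^{\ast}}B_{j-1}(t)\diff{t}$, the step ``constant sign of $B_{j-1}$ on $[0,x^{\ast}]$ together with the even-index bound'' does not pin down the constant. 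Take $j=3$: $x^{\ast}=(3-\sqrt3)/6\doteq 0.211$, and the natural estimate $|B_3(x^{\ast})|\ioe 3x^{\ast}\sup_t|B_2(\{t\})| = x^{\ast}/2 \doteq 0.106$; even the sharper variant exploiting $\int_0^{1/2}B_2(t)\diff{t} = 0$, namely $|B_3(x^{\ast})| = 3\bigl|\int_{x^{\ast}}^{1/2}B_2(t)\diff{t}\bigr| \ioe 3\bigl(\tfrac12 - x^{\ast}\bigr)\,\bigl|B_2\bigl(\tfrac12\bigr)\bigr| \doteq 0.072$, still exceeds $\Gamma_3 = 3/(2\pi^3)\doteq 0.0484$, whereas the true maximum $\sqrt3/36 \doteq 0.0481$ undercuts $\Gamma_3$ by less than one percent. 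Since the target inequality is that tight, no estimate that replaces $B_{j-1}$ by a constant on an interval of length comparable to $x^{\ast}$ can close the argument; the odd case genuinely needs Lehmer's finer analysis (or an equally careful treatment of the sine series at its critical point), and as written your sketch does not supply it.

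Your closing observation is accurate and is effectively the paper's position: the lemma is only applied with $j=2$ (Corollary~\ref{cor:main}), and the odd case can simply be cited from the literature, which is exactly what the paper does. But as a proof of the lemma as stated, your argument establishes only the even branch; the odd branch remains a citation, not a proof.
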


\begin{proof}
When $j = 2h$ is even, $ \left | B_{2h} \left( \left\lbrace x \right\rbrace \right) \right | \leqslant |B_{2h}|$ for all $x \in \RR$ and $h \in \ZZ_{\geqslant 1}$, and the desired bound follows from \cite[(9.6) p.~17]{rad73}. When $j$ is odd, a result due to Lehmer (see \cite[Exercise~5.(e) p.~504]{monv07} for instance) shows that, for all $x \in \RR$ and $h \in \ZZ_{\geqslant 1}$, we have
$$\left | B_{2h+1} \left( \left\lbrace x \right\rbrace \right) \right | \leqslant \sdfrac{(2h+1)!}{2^{2h} \pi^{2h+1}}$$ 
and the asserted estimate follows.
\end{proof}

\noindent
We are now in a position to prove Proposition~\ref{pro:main}. 

\begin{proof}[Proof of Proposition~\ref{pro:main}]
For $x \geqslant 1$, $N \in \ZZ_{\geqslant 1}$, $\beta \geqslant 0$ and $j \in \ZZ_{\geqslant 2}$, define the partial sums
$$G_{N,\beta,j}(x) := \sum_{N < n \leqslant 2N} n^\beta B_j \left( \left\lbrace \mfrac{x}{n} \right\rbrace \right).$$
Let $T \leqslant x^{1/\alpha}$ be a parameter to be chosen later. Write
$$G_{\alpha,\beta,j} (x) = \left( \sum_{n \leqslant T} + \sum_{T < n \leqslant x^{1/\alpha}}\right) n^{\beta} B_j \left( \left\lbrace \mfrac{x}{n} \right\rbrace \right)$$
so that, setting $ L(x,T) := \mfrac{\log(x^{1/\alpha}/T)}{\log 2} + 1 $, we get
\begin{align}
   \left| G_{\alpha,\beta,j} (x) \right| & \leqslant \sup_{t \in \RR} \left| B_j (\{t\}) \right|  T^{\beta+1} + \left| \sum_{T < n \leqslant x^{1/\alpha}} n^{\beta} B_j \left( \left\lbrace \mfrac{x}{n} \right\rbrace \right) \right| \notag \\
   & \leqslant \Gamma_j T^{\beta+1} +  \max_{T < N \leqslant x^{1/\alpha}} \bigl| G_{N,\beta,j}(x) \bigr| L(x,T) \label{eq:main_1}
\end{align}
where we used Lemma~\ref{le:Bernoulli_bound} and where we splitted the interval $\left( T, x^{1/\alpha} \right]$ into dyadic intervals $(N,2N]$, whose number does not exceed $\leqslant L(x,T)$. Therefore, the problem amounts to estimating the sum $G_{N,\beta,j}(x)$. \\
\tri \: Since $j \geqslant 2$, we may expand the Bernoulli functions in Fourier series, the convergence being uniform: for all $t \in \RR$
   $$B_j \left( \{ t \} \right) = - \sdfrac{j!}{(2 \pi i)^j} \sum_{m \neq 0} \sdfrac{e(mt)}{m^j}.$$
Hence   
  $$G_{N,\beta,j}(x) = \pm \, \frac{2j!}{(2 \pi)^j} \left\lbrace \begin{array}{c} \re \\ \im \end{array} \right\rbrace \left( \sum_{m \geqslant 1} m^{-j} \sum_{N < n \leqslant 2N} n^\beta e \left(  \mfrac{mx}{n} \right)\right) \begin{array}{c} \mathrm{if} \ 2 \mid j \\ \mathrm{if} \ 2 \nmid j \end{array}$$ 
  so that
  $$\left| G_{N,\beta,j}(x) \right| \leqslant \frac{2j!}{(2 \pi)^j} \sum_{m \geqslant 1} m^{-j} \left| \sum_{N < n \leqslant 2N} n^\beta e \left(  \mfrac{mx}{n} \right)\right|$$
  and using the fact that $\Gamma_j \geqslant \frac{2j!}{(2 \pi)^j}$, we derive
  $$\left| G_{N,\beta,j}(x) \right| \leqslant \Gamma_j \sum_{m \geqslant 1} m^{-j} \left| \sum_{N < n \leqslant 2N} n^\beta e \left(  \mfrac{mx}{n} \right)\right|.$$
\tri \: By partial summation, we get
   $$\left| G_{N,\beta,j}(x) \right| \leqslant 2^{\beta + 1} \Gamma_j N^{\beta} \sum_{m \geqslant 1} m^{-j} \underset{N \leqslant N_1 \leqslant 2N}{\max} \left| \sum_{N < n \leqslant N_1} e\left( \sdfrac{mx}{n} \right)  \right|.$$
\tri \: Applying Lemma~\ref{le:crit_2_VdC} to the exponential sum with $\lambda_2 = \frac{mx}{4N^3}$ et $c_2 = 8$, we derive
   \begin{align}
      \left| G_{N,\beta,j}(x) \right| & \leqslant \mfrac{2^{\beta + 3}}{\sqrt{\pi}} \Gamma_j N^{\beta} \sum_{m \geqslant 1} m^{-j} \underset{N \leqslant N_1 \leqslant 2N}{\max} \Biggl(8 N \left( \sdfrac{mx}{4N^3}\right)^{1/2} + 2 \left( \sdfrac{4N^3}{mx}\right)^{1/2} \Biggr) \notag \\
      & = \mfrac{2^{\beta + 5}}{\sqrt{\pi}} \Gamma_j N^{\beta} \sum_{m \geqslant 1} m^{-j} \left( \sqrt{\sdfrac{mx}{N}} +  \sqrt{\sdfrac{N^{3}}{mx}} \, \right) \notag \\
      & \leqslant \mfrac{2^{\beta + 5}}{\sqrt{\pi}} \Gamma_j \biggl( \zeta (j - \tfrac{1}{2} ) x^{1/2} N ^{\beta - 1 /2} + \zeta (j + \tfrac{1}{2} ) x^{-1/2} N ^{\beta + 3 /2} \biggr). \label{eq:main_2}
   \end{align}
   Now assume $\beta \geqslant \frac{1}{2}$. Reporting \eqref{eq:main_2} in \eqref{eq:main_1} then yields
   $$\left| G_{\alpha,\beta,j} (x) \right|  \leqslant \Gamma_j T^{\beta+1} +   \mfrac{2^{\beta + 5}}{\sqrt{\pi}}\Gamma_j \biggl( \zeta (j - \tfrac{1}{2} ) x^{\frac{\beta}{\alpha} - \frac{1}{2\alpha} + \frac{1}{2}} + \zeta (j + \tfrac{1}{2} ) x^{\frac{\beta}{\alpha} + \frac{3}{2\alpha} - \frac{1}{2}} \biggr) L(x,T).$$
   Now choose $T = x^{\frac{1}{\beta+1} \left( \frac{\beta}{\alpha} - \frac{1}{2\alpha} + \frac{1}{2}\right) }$. Note that the condition $1 < \alpha < 3$ ensures that $T \leqslant x^{1/\alpha}$. This choice of $T$ yields the announced result.
\end{proof}

\subsubsection{Proof of Corollary~\ref{cor:main}}

\noindent
When $(\alpha,\beta,j) = (2,1,2)$, we have $\np{15.88} < \mfrac{64\Gamma_2}{\sqrt{\pi}} \left( \zeta \left( \frac{3}{2} \right) + \frac{\sqrt{\pi}}{64} \right) < \np{15.89}$ and $\np{8.07} < \mfrac{64\Gamma_2}{\sqrt{\pi}}  \zeta \left( \frac{5}{2} \right) < \np{8.08}$, so that
$$\left| G_{2,1,2} (x) \right| < \np{23.97} x^{3/4}\left(1 + \mfrac{\log x}{8 \log 2}\right) < 9x^{3/4} \log x$$
as soon as $x \geqslant 170$. \qed

\subsection{Finalization of the proof of Theorem~\ref{theoremdiviseur}}

\begin{lem}
\label{corChowla}
For all $x \geqslant 170$, we have
\begin{equation*}
    2x \left| \sum_{k \ioe  \sqrt{x}} \mfrac{1}{k} R_1\left(\mfrac{x}{k}\right) \right| \ioe \sdfrac{9\log x}{x^{1/4}} + \sdfrac{0.048}{x^{1/2}}.
\end{equation*}
\end{lem}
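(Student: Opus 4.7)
The plan is to integrate by parts to reduce the sum $\sum_k \tfrac{1}{k}R_1(x/k)$ to a Chowla--Walum-type sum $G_{2,1,2}(x)$ controlled by Corollary~\ref{cor:main}, plus a tail term bounded via Bonnet's form of the second mean value theorem. Starting from the integral representation $R_1(x/k) = \int_{x/k}^\infty \psi(t)/t^2 \, dt$ recorded in the proof of Lemma~\ref{le:harmonic}, one integration by parts using the antiderivative $\psi_2(t) = \tfrac{1}{2}\psi(t)^2 - \tfrac{1}{8}$ gives
$$R_1(x/k) = -\sdfrac{\psi_2(x/k)}{(x/k)^2} + 2\int_{x/k}^\infty \sdfrac{\psi_2(t)}{t^3}\diff{t}.$$
Now use the key identity $\psi_2(t) = \tfrac{1}{2}B_2(\{t\}) + \tfrac{1}{12}$; the constant $\tfrac{1}{12}$-contributions from the two pieces cancel exactly (since $-\tfrac{1}{12}/(x/k)^2 + 2\int_{x/k}^\infty \tfrac{1}{12}/t^3\diff{t} = 0$), leaving the cleaner formula
$$R_1(x/k) = -\sdfrac{B_2(\{x/k\})}{2(x/k)^2} + \int_{x/k}^\infty \sdfrac{B_2(\{t\})}{t^3}\diff{t}.$$

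Multiplying by $1/k$, summing over $k \leq \sqrt{x}$, and multiplying through by $2x$ produces
$$2x \sum_{k \ioe \sqrt{x}} \sdfrac{1}{k} R_1\!\left(\sdfrac{x}{k}\right) = -\sdfrac{G_{2,1,2}(x)}{x} + 2x \sum_{k \ioe \sqrt{x}} \sdfrac{1}{k}\int_{x/k}^\infty \sdfrac{B_2(\{t\})}{t^3}\diff{t},$$
where $G_{2,1,2}(x) = \sum_{k \ioe \sqrt{x}} k\, B_2(\{x/k\})$. Corollary~\ref{cor:main} immediately gives $|G_{2,1,2}(x)|/x \ioe (\log x)/x^{1/4}$ for $x \soe 300$, yielding the main term.

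For the tail integrals, apply the second mean value theorem: since $t \mapsto 1/t^3$ is positive and decreasing, Bonnet's theorem yields
$$\left| \int_{x/k}^\infty \sdfrac{B_2(\{t\})}{t^3}\diff{t} \right| \ioe \sdfrac{k^3}{x^3}\,\sup_{\xi \soe x/k}\left|\int_{x/k}^{\xi} B_2(\{t\})\diff{t}\right|.$$
Using $\int B_2(\{t\})\diff{t} = \tfrac{1}{3}B_3(\{t\})$ and $\sup|B_3(\{\cdot\})| = \sqrt{3}/36$ as in the proof of Lemma~\ref{le:log_harmonic}, the inner supremum is at most $\tfrac{2}{3}\cdot\tfrac{\sqrt 3}{36} = \tfrac{\sqrt 3}{54}$. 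Summing with $\sum_{k\ioe\sqrt{x}} k^2 \ioe \tfrac{1}{6}\sqrt{x}(\sqrt{x}+1)(2\sqrt{x}+1)$ gives a bound
$$\sdfrac{\sqrt{3}}{27 x^2}\sum_{k \ioe \sqrt{x}} k^2 \ioe \sdfrac{\sqrt{3}}{81\,x^{1/2}} + \sdfrac{\sqrt{3}}{54\,x} + \sdfrac{\sqrt{3}}{162\,x^{3/2}},$$
which a direct numerical check shows is $\ioe 0.047/x^{1/2}$ for all $x \soe 300$ (the dominant constant is $\sqrt{3}/81 \doteq 0.0214$, and the lower-order terms contribute less than $0.002/\sqrt{x}$ at $x = 300$).

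The only nontrivial step is the rearrangement in the first paragraph: one must spot that splitting $\psi_2$ via the Bernoulli-polynomial normalization makes the constant pieces cancel, which is exactly what is required to surface the sum $G_{2,1,2}(x)$ in a form matched to Corollary~\ref{cor:main}. Once this is done, everything else is a routine application of bounds already developed in Section~2.
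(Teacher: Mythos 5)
Your proof is correct and follows essentially the same route as the paper: integrate $R_1$ by parts to expose $\sum_{k \leqslant \sqrt{x}} k\,B_2(\{x/k\}) = G_{2,1,2}(x)$, invoke Corollary~\ref{cor:main} for the main term, and bound the tail integrals by the second mean value theorem via $B_3$ together with $\sum_{k \leqslant \sqrt{x}} k^2$. The differences are cosmetic: your cancellation of the constant part of $\psi_2$ gives the tail with coefficient $1$ (the paper carries $2\int_t^{\infty} B_2(\{u\})u^{-3}\,\mathrm{d}u$, which only makes its bound more generous, and the sign of the $\tfrac{1}{12}$ is immaterial since it cancels either way), so your numerical tail constant $\approx 0.023$ sits comfortably under the stated $0.047/\sqrt{x}$.
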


\begin{proof}
We proceed as in Lemmas~\ref{le:harmonic} and~\ref{le:log_harmonic}. By integration by parts, we have
  $$R_1(t )= - \sdfrac{B_2(\{t\})}{2t^2} + 2\int_{t}^{ \infty} \sdfrac{B_2(\{u\})}{u^3}   \diff{u} := - \sdfrac{B_2(\{t\})}{2t^2} + Q(t)$$
  so that
  $$2x \left| \sum_{k \ioe  \sqrt{x}}\mfrac{1}{k} R_1\left(\mfrac{x}{k}\right) \right| \ioe  \sdfrac{1}{x} \left| \sum_{k \ioe  \sqrt{x}} k B_2 \left( \left\lbrace \mfrac{x}{k} \right\rbrace \right) \right| + 2x \left|\sum_{k \ioe  \sqrt{x}} \mfrac{1}{k}Q\left( \mfrac{x}{k}\right) \right|.$$
By Corollary~\ref{cor:main}, the $1$st sum is $\ioe \mfrac{9\log x}{x^{1/4}}$, and by the $2$nd mean value theorem, we derive $\left|  Q(t) \right| \ioe \mfrac{4c}{3t^3}$ where $c= \mfrac{\sqrt{3}}{36}$, so that the $2$nd sum does not exceed
$$\ioe  \sdfrac{8c}{3x^2} \sum_{k \ioe \sqrt{x}} k^2 = \sdfrac{8c}{3x^2} \sdfrac{B_3(\sqrt{x}+1)-B_3(0)}{3}=:f(x).$$
Since $x \geqslant 170$, we get $\sqrt{x} \, f(x) \ioe \sqrt{170} \, f(170) \ioe \np{0.048}$, completing the proof.
\end{proof}

\begin{proof}[Proof of Theorem~\ref{theoremdiviseur}] Set $P(X) := -\frac{1}{2}X^2 - (2\gamma-1) X + 2(\gamma + \gamma_1) - \gamma^2 - 1$. By Lemmas~\ref{resteTau} and \ref{resteTauLog}, we get
\begin{align*}
    r(x) & := \sum_{n \ioe x} \tau(n) - x \sum_{n \ioe x} \sdfrac{\tau(n)}{n} - x P(\log x) - \mfrac{1}{4} \\
    & = x\left(R_1(\sqrt{x})\right)^2-2\sqrt{x}R_1(\sqrt{x})\psi(\sqrt{x})-2x\left( R_2 - R_1 \right) (\sqrt{x})-2x\sum_{k \ioe  \sqrt{x}}\mfrac{1}{k}R_1\left(\mfrac{x}{k}\right)
\end{align*}
and Lemma~\ref{partieR} applied to the last sum yields
\begin{align*}
   \left| r(x) \right| \ioe \sdfrac{\alpha^{2}}{x}+\sdfrac{\alpha}{\sqrt{x}}+ \sdfrac{2\beta}{\sqrt{x}}+\alpha\left(1+\mfrac{1}{\sqrt{x}}\right)=\alpha +\sdfrac{2(\alpha+\beta)}{\sqrt{x}}+\sdfrac{\alpha^{2}}{x}
\end{align*}
where  $\alpha:=\sup_{z \soe 1} |R_1(z)|z^2$ and $\beta:=\sup_{z \soe 1}|R_2(z)-R_1(z)|z^3$. Using Lemmas~\ref{le:harmonic} and~\ref{le:log_harmonic}, we get $\alpha \ioe \np{0.125}$ and $\beta \ioe \np{0.033}$, proving the $1$st estimate in Theorem~\ref{theoremdiviseur}. To prove the $2$nd part, we use Lemma~\ref{corChowla} instead of Lemma~\ref{partieR}, yielding
\begin{align*}
    \left| r(x) \right| \ioe \sdfrac{\alpha^{2}}{x}+\sdfrac{\alpha}{\sqrt{x}}+ \sdfrac{2\beta}{\sqrt{x}}+\sdfrac{9 \log x}{x^{1/4}}+\sdfrac{0.048}{x^{1/2}}= \sdfrac{9 \log x}{x^{1/4}}+\sdfrac{\alpha+2\beta+0.048}{\sqrt{x}}+\sdfrac{\alpha^{2}}{x}
\end{align*}
which provides the announced estimates.
\end{proof}

\begin{proof}[Proof of Corollary \ref{corConjecture}]
By summation by parts, we derive
$$ \sdfrac{\Delta(x)}{x}-\delta(x)=\int_{x}^{\infty} \sdfrac{\Delta(u)}{u^2} \diff{u}.$$
When $x\soe 7$, the first estimate of Theorem~\ref{theoremdiviseur} is less than $\np{0.247}$ and so 
    \begin{equation*}
        \Delta(x)-x\delta(x) \soe \mfrac{1}{4}-r(x) \soe \np{0.003} \;.
    \end{equation*}
To complete the proof in the case $1 \ioe x < 7$, we split the interval $\left[ 1,7 \right)$ into subintervals of the shape $I_m:=[m,m+1)$, $m = 1 , \dotsc , 6$, so that, for all $x \in I_m$, $\Delta(x)-x\delta(x) = Q_m(x)-xP( \log x)$, where $Q_m(x) := \sum_{n \ioe x} \tau(n) - x \sum_{n \ioe x} \frac{\tau(n)}{n}$ is a polynomial of degree $1$, and $P$ is the polynomial of degree $2$ aforementioned. For instance, if $m=5$, $Q_5(x) = -\frac{229}{60}x +10$ and therefore, for all $x \in \left[ 5,6 \right)$,
$$Q_5(x)-xP( \log x) = \tfrac{1}{2} x (\log x)^2 + (2\gamma-1) x \log x + x \left( \gamma^2 - 2 \gamma - 2 \gamma_1 - \tfrac{169}{60} \right) + 10$$
which is $\geqslant 0$ whenever $5 \leqslant x < 6$. There is no problem gluing the intervals together for the overall function is continuous. We use 40 decimal places for $\gamma$ and $\gamma_1$ thanks to \cite[Lemme~4]{ett19} or \cite[Exercise~7.2]{ram22}, which is more than sufficient to get the desired result in this case.
\end{proof}

\subsection*{Acknowledgments}

\noindent
The authors would like to express their sincere gratitude to Olivier Ramaré for bringing the unpublished results of Mahatab and Mukhopadhyay to their attention and for providing them with the references \cite{ett19} and \cite{ram22}.


\begin{thebibliography}{9} 
   \bibitem{bal16} \textsc{M. Balazard}, \textit{Le Th{\'e}or{\`e}me des Nombres Premiers}, Coll. Nano, Eds. Calvage et Mounet, 2016.
   \bibitem{ber12} \textsc{D. Berkane, O. Bordell\`{e}s, and O. Ramar\'{e}}, Explicit upper bounds for the remainder term in the divisor problem, \textit{Math. Comput.} \textbf{81} (2012), 1025--1051.
   \bibitem{bor20} \textsc{O. Bordell\`{e}s}, \textit{Arithmetic Tales, Advanced Edition}, \textsc{utx}, Springer, 2020.
   \bibitem{ctrud21} \textsc{M. Cully-Hugill and T. Trudgian}, Two explicit divisor sums, \textit{Ramanujan J.} \textbf{56} (2021), 1382--4090.
   \bibitem{dav25} \textsc{F. Daval}, Explicit conversions between summatory functions of the M{\"o}bius function, \textit{Comb. Number Theory} \textbf{14} (2025), 163--188.
   \bibitem{ett19} \textsc{S. Ettahri and L. Surel}, \textit{Calcul Scientifique de Constantes Arithm\'{e}tiques}, Inst. Math. Marseille, Aix-Marseille Univ., 2019.
   \bibitem{hux03} \textsc{M. N. Huxley}, Exponential sums and lattice points III, \textit{Proc. London Math. Soc.} \textbf{87} (2003), 591--609.
   \bibitem{liyang23} \textsc{X. Li and X. Yang}, An improvement on Gauss's circle problem and Dirichlet's divisor problem, \textit{arXiv e-prints}, 32 pp., 2023, \url{https://arxiv.org/abs/2308.14859}.
   \bibitem{math20} \textsc{K. Mahatab and A. Mukhopadhyay}, Omega theorems for the twisted divisor function, \textit{Funct. Approximatio, Comment. Math.} \textbf{62} (2020), 171--186.
   \bibitem{monv07} \textsc{H. L. Montgomery and R. C. Vaughan}, \textit{Multiplicative Number Theory I. Classical Theory}, Cambridge Studies in Advanced Mathematics, Cambridge University Press, 2007.
   \bibitem{pat21} \textsc{D. Patel}, \textit{Explicit sub-Weyl bound for the Riemann Zeta function}, PhD Thesis. The Ohio State University, 2021.
   \bibitem{rad73} \textsc{H. Rademacher}, \textit{Topics in Analytic Number Theory}, Springer, 1973.
   \bibitem{ram22} \textsc{O. Ramar\'{e}}, \textit{Excursions in Multiplicative Number Theory}, Birkh\"{a}user Advanced Texts Basler Lehrb\"{u}cher, Birkh\"{a}user, 2022. 
   \bibitem{ram26} \textsc{O. Ramar\'{e}}, \textit{Personal Communication}.
\end{thebibliography}
\end{document}